\documentclass[final,11p,times]{elsarticle}
\usepackage{amssymb}
\usepackage{amsmath}
\usepackage{amsthm}
\usepackage[utf8]{inputenc}
\usepackage{marginnote}
\usepackage{amsfonts}
\usepackage{textcomp}
\usepackage{color,soul}
\usepackage{placeins}
\usepackage{tikz}
\usepackage{makecell}
\usepackage{amssymb}
\usepackage{graphicx}
\usepackage{hyperref}
\usepackage[none]{hyphenat}
\usepackage{lipsum}
\theoremstyle{mythmstyle}
\newtheorem{theorem}{Theorem}[section]
\newtheorem{proposition}[theorem]{Proposition}
\newtheorem{lemma}[theorem]{Lemma}

\theoremstyle{definition}
\newtheorem{definition}[theorem]{Definition}
\newtheorem{remark}[theorem]{Remark}
\newtheorem{example}[theorem]{Example}
\allowdisplaybreaks

\newcommand{\inprod}[2]{\langle#1 \vphantom{#2}, #2\vphantom{#1}\rangle}

\journal{}
\begin{document}
	\begin{frontmatter}
		
		\title{Vanishing Moment Dilations and Applications to Balanced
and Graph Frames}
 \author{Deguang Han}
		\ead{deguang.han@ucf.edu}
\author{Shankhadeep Mondal*}
		\ead{shankhadeep.mondal@ucf.edu}
       \author{Kai Liu}
		\ead{kai.liu@ucf.edu}
		\author{Omar Nour}
		\ead{omar.nour@ucf.edu}

		\address{Department of Mathematics, University of Central Florida, 4000 Central Florida Blvd, Orlando, USA}
		\cortext[shan]{Corresponding Author: Shankhadeep Mondal (shankhadeep.mondal@ucf.edu)}		
		
		\begin{abstract}
		Given a matrix $A\in M_{k\times N}(\Bbb{F}),$ we examine finite frames $\{f_j\}_{j=1}^N$ with vanishing $A$-moments and investigate their connections with the frame dilation property. We establish several equivalent characterizations for the existence of vanishing $A-$moment dilations, including conditions involving dual frames, operator ranges, and linear independence properties associated with the matrix $A$. In particular, we show that every frame with redundancy $k$ admits a dual frame with vanishing $A$-moments for some totally nonsingular matrix $A$.  A key takeaway of this is that by invoking such a dual frame as the encoding frame, any frame with redundancy $k$ (erasure robustness is not needed)  allows perfect signal reconstructions for any number of $\ell$-erasures ($\ell \leq k$) which only involves  inverting a matrix of small size ($\ell$)  instead of inverting the subframe operator which is  a matrix of large size ($N-\ell$). A special case of this type of frames are  $k$-th vanishing moment frames which have natural connections with graph frames.  We characterize the existence of dual frames with $k-th$  vanishing moments through polynomial cancellation identities and operator theoretical conditions. We further discuss some applications to problems in erasure recovery and  graph frames. These results provide a unified framework linking vanishing moment conditions, dual frame theory, dilation theory, and graph theoretic constructions of frames.
		\end{abstract}

		\begin{keyword} Vanishing moments; frame dilation;  balanced frames; graph frames; erasure recovery; totally nonsingular matrices; frame theory.\\~\\
			
			\MSC[2020] 15A02, 42C15, 47B02, 94A12
		\end{keyword}
		
	\end{frontmatter}

  \section{Introduction}

Redundancy is one of the central features that distinguishes frames
from bases. A signal represented by a frame may admit several
reconstruction formulas, and this nonuniqueness becomes particularly
useful when the reconstruction system is required to satisfy additional
algebraic, geometric, or stability conditions. Since their introduction
by Duffin and Schaeffer \cite{duff}, frames have therefore become an
important tool in harmonic analysis and in applications where stable
and flexible representations are essential.

An important manifestation of this flexibility is the ability of
frames to withstand the loss of coefficients. Early contributions in
this direction established fundamental connections between redundancy,
tightness, and erasure robustness. Casazza and Kova\v{c}evi\'c
\cite{casa2} studied equal-norm tight frames under coefficient losses,
while Goyal, Kova\v{c}evi\'c, and Kelner \cite{goya} showed the
optimality of uniform tight frames for single erasures from a
coding-theoretic perspective. Holmes and Paulsen \cite{holm} formulated
the reconstruction problem through the operator norm of the associated
error operator and obtained corresponding optimality results for
Parseval frames and canonical duals. These developments established
erasure robustness as an important aspect of finite frame theory.

Several approaches to optimal dual-frame reconstruction have been developed using operator norms, spectral and numerical radii, averaged errors, and probabilistic models of coefficient loss
\cite{ara,arab,arab1,shan1,bodm1,deep1,deep,dev,jins,jerr,shan2,shan5,sali,peh}.
In particular, probabilistic erasure models and their associated
weighted reconstruction problems have been studied extensively in
\cite{shan,shan1,leng3,leng,li,li1}.
Motivated by this principle, we study dual frames satisfying prescribed
linear moment constraints. For a matrix
$A=(a_{ij})\in\mathbb{F}^{m\times N}$ and a finite sequence
$F=\{f_j\}_{j=1}^{N}$, we say that $F$ has vanishing $A$-moments when
\[
\sum_{j=1}^{N}a_{ij}f_j=0,
\qquad i=1,\ldots,m.
\]
Thus, a single matrix $A$ simultaneously encodes several linear
cancellation conditions. Our objective is to determine when these
conditions can be realized by an appropriate dual frame or, more
generally, by a dilation of the original frame.

Our first main result characterizes the vanishing $A$-moment dilation
property in several equivalent ways. In particular, we show that it is
equivalent to the transversality condition
\[
\operatorname{Row}(A)\cap\operatorname{Ran}(\Theta_F)=\{0\},
\]
and to the existence of a dual frame satisfying the prescribed
$A$-moment conditions. This connects moment cancellation with the
geometry of the analysis operator and dual-frame theory. We then apply
this viewpoint to erasure robustness and characterize robust frames
through matrices encoding linear dependencies among their vectors.
This brings full-spark and totally nonsingular matrices naturally into
the framework.

We further specialize the $A$-moment formulation to higher-order
vanishing moments, where the associated moment spaces determine the
existence of duals satisfying polynomial cancellation conditions. The
first-order case leads to balanced frames, characterized by $\sum_{i=1}^{N}f_i=0.$
This observation also provides a natural connection with graph frames:
the Laplacian of a finite graph annihilates the all-ones vector. We
exploit this connection to study the relationship between balanced
frames and graph frames and, using linearly connected components,
obtain a componentwise characterization of balanced frames that are
equivalent to graph frames.

The paper is organized as follows. Section~2 recalls the necessary
background and introduces vanishing $A$-moments. In Section~3, we
characterize the vanishing $A$-moment dilation property and study its
connections with dual frames, erasure robustness, full-spark frames,
and totally nonsingular relation matrices. Section~4 treats
higher-order vanishing moments, balanced frames, and their relationship
with graph frames, including a characterization through linearly
connected components. We conclude with several directions for further
investigation.

\section{Preliminaries}
Let $\mathcal{H}_n$ denote an $n$-dimensional (real or complex) Hilbert space. 
A finite sequence of elements $F=\{f_i\}_{i=1}^N \subset \mathcal{H}_n$ is called a \textit{frame} for $\mathcal{H}_n$ if there exist constants $A,B>0$ such that
\[
A\|f\|^2 \leq \sum_{i=1}^N |\langle f,f_i\rangle|^2 \leq B\|f\|^2,
\qquad \forall f\in \mathcal{H}_n .
\]

The numbers $A$ and $B$ are called the \textit{frame bounds}. They are not unique. 
The \textit{optimal lower frame bound} is the supremum over all lower frame bounds, and the \textit{optimal upper frame bound} is the infimum over all upper frame bounds.

A frame $F=\{f_i\}_{i=1}^N$ is called \textit{normalized} if $\|f_i\|=1$ for every $i$. 
If $A=B$, that is,
\[
\sum_{i=1}^N |\langle f,f_i\rangle|^2 = A\|f\|^2, \qquad \forall f\in \mathcal{H}_n,
\]
then $F$ is called a \textit{tight frame}. If $A=B=1$, then $F$ is called a \textit{Parseval frame}. Every finite sequence $\{f_i\}_{i=1}^R$ in $\mathcal{H}_n$ is a frame for the Hilbert space $W := \mathrm{span}\{f_i\}_{i=1}^R .$

Let $F=\{f_k\}_{k=1}^N$ be a frame for $\mathcal{H}_n$. The linear mapping 
\[
\Theta_F : \mathcal{H}_n \rightarrow \mathbb{C}^N, \qquad 
\Theta_F(f)=\{\langle f,f_i\rangle\}_{i=1}^N
\]
is called the \textit{analysis operator}.  The adjoint operator $\Theta_F^* : \mathbb{C}^N \rightarrow \mathcal{H}_n$ defined by
\[
\Theta_F^*\left(\{c_i\}_{i=1}^N\right)=\sum_{i=1}^N c_i f_i
\]
is called the \textit{synthesis operator} (or \textit{preframe operator}). The \textit{frame operator} $S_F : \mathcal{H}_n \to \mathcal{H}_n$ is defined by
\[
S_F f = \Theta_F^*\Theta_F f = \sum_{i=1}^N \langle f,f_i\rangle f_i .
\]

The operator $S_F$ is positive and invertible on $\mathcal{H}_n$, which yields the reconstruction formula
\[
f=\sum_{i=1}^N \langle f,S_F^{-1}f_i\rangle f_i = \sum_{i=1}^N \langle f,f_i\rangle S_F^{-1}f_i.
\]

A frame $G=\{g_i\}_{i=1}^N$ in $\mathcal{H}_n$ is called a \textit{dual frame} of $F=\{f_i\}_{i=1}^N$ if every $f\in \mathcal{H}_n$ admits the representations
\[
f = \sum_{i=1}^N \langle f,f_i\rangle g_i
  = \sum_{i=1}^N \langle f,g_i\rangle f_i .
\]

It is known that $S_F^{-1}F=\{S_F^{-1}f_i\}_{i=1}^N$ is a frame for $\mathcal{H}_n$, called the \textit{ standard dual or canonical dual } of $F$. Moreover for $N>n$, there exist infinitely many dual frames of $F$ \cite{ole}. Every dual frame 
$G=\{g_i\}_{i=1}^N$ of $F$ can be written in the form
\[
g_i = S_F^{-1}f_i + u_i , \qquad i=1,\dots,N,
\]
where the sequence $\{u_i\}_{i=1}^N$ satisfies
\[
\sum_{i=1}^N \langle f,f_i\rangle u_i = 
\sum_{i=1}^N \langle f,u_i\rangle f_i =0,
\qquad \forall f\in \mathcal{H}_n .
\]

\begin{definition}
Let $A \in \mathbb{C}^{k \times N}$. A sequence $\{g_i\}_{i=1}^N$ is said to have \emph{vanishing $A$-moment} if 
$$A \left[ \langle f, g_1 \rangle, \ldots, \langle f, g_N \rangle  \right]^t = 0, \quad  \forall f \in \mathcal{H}_n.$$
\end{definition}

\begin{definition}
A frame $F$ is said to have the \emph{vanishing $A$-moment dilation property} if there exists a sequence $H = \{h_i\}_{i=1}^N$ such that
$$\Theta_F^* \Theta_H = 0\quad \text{and}\quad A\left[ f_1 + h_1, \ldots, f_N + h_N \right]^t =0 .$$
\end{definition}

Let $F=\{f_i\}_{i=1}^{N}$ be a frame for an $n$-dimensional
Hilbert space $\mathcal H_n$. The frame $F$ is said to be
\emph{full spark} if every subset of $F$ consisting of $n$
vectors is linearly independent. Equivalently, every $n$-element
subset of $F$ is a basis for $\mathcal H_n$.

\subsection{Exact Recovery}

We first consider the situation in which the matrix defining the
vanishing moment conditions possesses sufficient nonsingularity to
recover the erased coefficients exactly. The main advantage of this
approach is that the recovery requires only the solution of a linear
system whose size is determined by the number of erasures, rather than
the inversion of the frame operator associated with the surviving
frame vectors.

Let $F=\{f_j\}_{j=1}^{N}$ be an $A$-balanced frame for
$\mathcal H_n$, where $A=(a_{ij})\in\mathbb F^{m\times N}$. Thus,
\[
\sum_{j=1}^{N}a_{ij}f_j=0,
\qquad i=1,\ldots,m.
\]
For $x\in\mathcal H_n$, put
\[
c_j=\langle x,f_j\rangle,
\qquad
c=(c_1,\ldots,c_N)^t.
\]
The $A$-balanced condition yields
\[
Ac=0.
\]

Suppose that the coefficients indexed by
$\Lambda\subset\{1,\ldots,N\}$ are erased. Writing
$c_\Lambda=(c_j)_{j\in\Lambda}$, we obtain
\[
A_\Lambda c_\Lambda
=
-A_{\Lambda^c}c_{\Lambda^c},
\]
where the right-hand side is completely determined by the available
coefficients. Consequently, whenever $A_\Lambda$ has full column rank,
the erased coefficients are uniquely determined by
\[
c_\Lambda
=
-\left(A_\Lambda^*A_\Lambda\right)^{-1}
 A_\Lambda^*A_{\Lambda^c}c_{\Lambda^c}.
\]
In particular, if $|\Lambda|=m$ and $A_\Lambda$ is invertible, then
\[
c_\Lambda
=
-A_\Lambda^{-1}A_{\Lambda^c}c_{\Lambda^c}.
\]

Thus, if $A$ is totally nonsingular, every collection of at most $m$
erased coefficients can be recovered exactly. For a single erasure $\Lambda=\{\lambda\}$, one may use any row
$i$ for which $a_{i\lambda}\neq0$ to obtain
\[
c_\lambda
=
-\frac{1}{a_{i\lambda}}
\sum_{j\neq\lambda}a_{ij}c_j.
\]
Hence, if $A$ is totally nonsingular, every single erased coefficient
is recoverable from the remaining coefficients.

For two erasures $\Lambda=\{\lambda_1,\lambda_2\}$, choose two rows
$i_1,i_2$ such that
\[
\begin{pmatrix}
a_{i_1\lambda_1} & a_{i_1\lambda_2}\\
a_{i_2\lambda_1} & a_{i_2\lambda_2}
\end{pmatrix}
\]
is invertible. Then
\[
\begin{pmatrix}
c_{\lambda_1}\\
c_{\lambda_2}
\end{pmatrix}
=
-
\begin{pmatrix}
a_{i_1\lambda_1} & a_{i_1\lambda_2}\\
a_{i_2\lambda_1} & a_{i_2\lambda_2}
\end{pmatrix}^{-1}
\begin{pmatrix}
\displaystyle\sum_{j\notin\Lambda}a_{i_1j}c_j\\[2mm]
\displaystyle\sum_{j\notin\Lambda}a_{i_2j}c_j
\end{pmatrix}.
\]
In particular, total nonsingularity of $A$ guarantees exact recovery
from every pattern of one or two erasures.

A particularly useful deterministic choice of the relation matrix is
provided by a Vandermonde matrix. Let
$t_1,\ldots,t_N$ be distinct positive numbers and consider
\[
A=
\begin{pmatrix}
1&1&\cdots&1\\
t_1&t_2&\cdots&t_N\\
t_1^2&t_2^2&\cdots&t_N^2\\
\vdots&\vdots&&\vdots\\
t_1^{m-1}&t_2^{m-1}&\cdots&t_N^{m-1}
\end{pmatrix}.
\]
For a suitable ordering $0<t_1<\cdots<t_N$, this matrix is strictly
totally positive, and hence, in particular, totally nonsingular.
Thus every square submatrix of $A$ of order at most $m$ is
invertible.

Suppose that $F=\{f_j\}_{j=1}^{N}$ is $A$-balanced and that
$\Lambda=\{\lambda_1,\ldots,\lambda_r\}$, with $r\leq m$, is the set
of erased coefficients. Using the first $r$ moment equations, we obtain
\[
V_\Lambda
\begin{pmatrix}
c_{\lambda_1}\\
\vdots\\
c_{\lambda_r}
\end{pmatrix}
=
-
\begin{pmatrix}
\displaystyle\sum_{j\notin\Lambda}c_j\\[2mm]
\displaystyle\sum_{j\notin\Lambda}t_jc_j\\
\vdots\\
\displaystyle\sum_{j\notin\Lambda}t_j^{r-1}c_j
\end{pmatrix},
\]
where
\[
V_\Lambda=
\begin{pmatrix}
1&1&\cdots&1\\
t_{\lambda_1}&t_{\lambda_2}&\cdots&t_{\lambda_r}\\
\vdots&\vdots&&\vdots\\
t_{\lambda_1}^{r-1}&t_{\lambda_2}^{r-1}
&\cdots&t_{\lambda_r}^{r-1}
\end{pmatrix}.
\]
Since the nodes $t_{\lambda_1},\ldots,t_{\lambda_r}$ are distinct,
\[
\det V_\Lambda
=
\prod_{1\leq p<q\leq r}
(t_{\lambda_q}-t_{\lambda_p})
\neq0.
\]
Consequently,
\[
\begin{pmatrix}
c_{\lambda_1}\\
\vdots\\
c_{\lambda_r}
\end{pmatrix}
=
-
V_\Lambda^{-1}
\begin{pmatrix}
\displaystyle\sum_{j\notin\Lambda}c_j\\[2mm]
\displaystyle\sum_{j\notin\Lambda}t_jc_j\\
\vdots\\
\displaystyle\sum_{j\notin\Lambda}t_j^{r-1}c_j
\end{pmatrix}.
\]

Moreover, the inverse $V_\Lambda^{-1}$ can be written explicitly in
terms of the Lagrange interpolation polynomials. For
$q=1,\ldots,r$, define
\[
L_q(t)
=
\prod_{\substack{\ell=1\\ \ell\neq q}}^{r}
\frac{t-t_{\lambda_\ell}}
     {t_{\lambda_q}-t_{\lambda_\ell}}
=
\sum_{k=0}^{r-1}\gamma_{qk}t^k.
\]
Then
\[
V_\Lambda^{-1}
=
(\gamma_{q,k-1})_{q,k=1}^{r}.
\]
Hence the missing frame coefficients can be recovered explicitly from
the surviving coefficients without reconstructing the signal first.
This gives a deterministic and constructive recovery procedure for
every erasure pattern of cardinality at most $m$.

For example, in the case of a single erasure
$\Lambda=\{\lambda\}$, the zeroth moment relation immediately gives
\[
c_\lambda=-\sum_{j\neq\lambda}c_j.
\]

For two erasures
$\Lambda=\{\lambda_1,\lambda_2\}$, the first two moment relations give
\[
\begin{pmatrix}
1&1\\
t_{\lambda_1}&t_{\lambda_2}
\end{pmatrix}
\begin{pmatrix}
c_{\lambda_1}\\
c_{\lambda_2}
\end{pmatrix}
=
-
\begin{pmatrix}
\displaystyle\sum_{j\notin\Lambda}c_j\\[2mm]
\displaystyle\sum_{j\notin\Lambda}t_jc_j
\end{pmatrix}.
\]
Since $t_{\lambda_1}\neq t_{\lambda_2}$,
\[
\begin{pmatrix}
1&1\\
t_{\lambda_1}&t_{\lambda_2}
\end{pmatrix}^{-1}
=
\frac{1}{t_{\lambda_2}-t_{\lambda_1}}
\begin{pmatrix}
t_{\lambda_2}&-1\\
-t_{\lambda_1}&1
\end{pmatrix}.
\]
Therefore,
\[
c_{\lambda_1}
=
\frac{
\displaystyle
\sum_{j\notin\Lambda}(t_j-t_{\lambda_2})c_j
}{
t_{\lambda_2}-t_{\lambda_1}
}\quad \text{and} \quad c_{\lambda_2}
=
\frac{
\displaystyle
\sum_{j\notin\Lambda}(t_{\lambda_1}-t_j)c_j
}{
t_{\lambda_2}-t_{\lambda_1}}.
\]

Thus, once the erasure locations are known, one or two missing
coefficients can be recovered directly from the surviving
coefficients by explicit formulas.

\begin{example}
Let $\mathcal H_2=\mathbb R^2$ and consider the frame
$F=\{f_j\}_{j=1}^{5}$ given by
\[
f_1=
\begin{pmatrix}1\\0\end{pmatrix},
\quad
f_2=
\begin{pmatrix}-3\\1\end{pmatrix},
\quad
f_3=
\begin{pmatrix}3\\-3\end{pmatrix},
\quad
f_4=
\begin{pmatrix}-1\\3\end{pmatrix},
\quad
f_5=
\begin{pmatrix}0\\-1\end{pmatrix}.
\]
Consider the Vandermonde-type matrix
\[
A=
\begin{pmatrix}
1&1&1&1&1\\
1&2&3&4&5\\
1&4&9&16&25
\end{pmatrix}.
\]
Since the nodes $1,2,3,4,5$ are distinct and nonzero, every square
submatrix of $A$ of order at most $3$ is nonsingular. Hence $A$ is
totally nonsingular. A direct verification gives $\sum_{j=1}^{5}f_j=0,\quad
\sum_{j=1}^{5}j f_j=0,\quad
\sum_{j=1}^{5}j^2 f_j=0.$
Thus $F$ is $A$-balanced.

For $x\in\mathbb R^2$, let $c_j=\langle x,f_j\rangle,\quad j=1,\ldots,5.$ Then $Ac=0$. Suppose, for instance, that the coefficients
$c_1,c_3,c_5$ are erased, while $c_2$ and $c_4$ are available.
The three moment relations give
\[
\begin{pmatrix}
1&1&1\\
1&3&5\\
1&9&25
\end{pmatrix}
\begin{pmatrix}
c_1\\c_3\\c_5
\end{pmatrix}
=
-
\begin{pmatrix}
1&1\\
2&4\\
4&16
\end{pmatrix}
\begin{pmatrix}
c_2\\c_4
\end{pmatrix}.
\]
The $3\times3$ coefficient matrix is nonsingular, and therefore the
three missing coefficients are uniquely determined. More explicitly,
\[
c_1=-\frac{3}{8}c_2+\frac{1}{8}c_4,\qquad
c_3=-\frac{3}{4}c_2-\frac{3}{4}c_4,\qquad
c_5=\frac{1}{8}c_2-\frac{3}{8}c_4.
\]

Hence three erased frame coefficients can be recovered exactly from
the two surviving coefficients. Moreover, since $A$ is totally
nonsingular, the argument is independent of the particular erasure
pattern: \emph{every collection of at most three erased coefficients
can be recovered exactly from the remaining coefficients}. This
illustrates how vanishing $A$-moments provide a uniform algebraic
recovery mechanism for multiple erasures.
\end{example}

\subsection{Approximation}
As computing matrix inverses for exact recovery may be computationally expensive in practice, we implement the method of compressive sensing by considering when the matrix defining the vanishing moment condition possesses the restricted isometry property.  Let $A\in \mathbb{C}^{m\times N}$ and let $k\in\mathbb{N}$.  We say that $A$ satisfies the \textit{Restricted Isometry Property (RIP) of order $k$} if there exists a constant $\delta_k\in (0,1)$ such that for every $k$-sparse vector $x\in\mathbb{C}^N$ (i.e., having at most $k$ non-zero entries) the following inequality holds:
\[
(1-\delta_k)\|x\|_2^2
\le
\|A x\|_2^2
\le
(1+\delta_k)\|x\|_2^2.
\]
The smallest such constant $\delta_k$ is called the \textit{Restricted Isometry Constant (RIC)} of order $k$. Note this is equivalent to $\|I_k-A_k^*A_k\|\leq \delta_k$ for every $m\times k$ submatrix $A_k$

Let $F=\{f_i\}_{i=1}^N$ be an $A$-balanced frame, where $A$ is a $k\times N$ matrix with the $k$-RIP, and let $G=\{g_i\}_{i=1}^N$ be a dual frame for $F$. Suppose that $\Lambda=\{\lambda_1,\dots,\lambda_k\}\subset\{1,\dots,N\}$ is an index set corresponding to  $k$-erasures. The erasure operator $\mathcal{E}_\Lambda$ is defined by 
$$\mathcal{E}_\Lambda\coloneqq\Theta_G^*D_\Lambda\Theta_F=\sum_{i\in\Lambda}\inprod{x}{f_i}g_i,$$
where $D_\Lambda$ is an $N\times N$ diagonal matrix with $d_{ii}=1$ if $i\in\Lambda$ and zero otherwise. Define $\mathcal{R}_\lambda\coloneqq I-\mathcal{E}_\Lambda=\sum_{i\in\Lambda^c}\inprod{x}{f_i}g_i.$ By the reconstruction formula, any vector $x$ can be written as 
\begin{equation}
x=\sum_{i\in\Lambda}\inprod{x}{f_i}g_i+\sum_{i\in\Lambda^c}\inprod{x}{f_i}g_i=\mathcal{E}_\Lambda x+\mathcal{R}_\Lambda x,
\end{equation}
where $\Lambda^c=\{1,\dots,N\}\backslash\Lambda$.

Now suppose that $F$ is $A$-balanced, where $A$ is a $k\times N$ matrix with the $k$-RIP, then the erased component $\mathcal{E}_\Lambda x$ of the signal $x$ may be approximated by first observing that the erased coefficients can be expressed in terms of the non-erased coefficients via the $A$-balanced property:
$$\sum_{j\in\Lambda}a_{ij}\inprod{x}{f_j}=-\sum_{j\in\Lambda^c}a_{ij}\inprod{x}{f_j}, \quad \text{for }i=1,\dots,k.$$
Next, observe that the $k$-RIP of $A$ is equivalent to $\|D_\Lambda-D_\Lambda A^*AD_\Lambda\|\le \delta_k$ for every $D_\Lambda$ with $|\Lambda|\leq k$. Denoting $\Tilde{E}^{(1)}=D_\Lambda A^*AD_\Lambda$, the first order approximation of $x$ is given by 

$$x^{(1)}=\Theta_G^*\Tilde{E}^{(1)}_\Lambda\Theta_Fx+\mathcal{R}_\Lambda x.$$

Higher order approximations can be iteratively produced for better accuracy by employing the following iteration:
\begin{align*}
x^{(2)}&=\Theta_G^*\Tilde{E}^{(2)}_\Lambda\Theta_Fx+\mathcal{R}_\Lambda x,\\
&\;\;\vdots\\
x^{(m)}&=\Theta_G^*\Tilde{E}^{(m)}_\Lambda\Theta_Fx+\mathcal{R}_\Lambda x,
\end{align*}
where $\Tilde{E}^{(m)}_\Lambda=\Tilde{E}^{(1)}_\Lambda+(D_\Lambda-\Tilde{E}^{(1)}_\Lambda)\Tilde{E}^{(m-1)}_\Lambda,$ for $m\ge2.$ The error of this approximation is given by
\begin{align*}
    x-x^{(m)}&= \Theta_G^*\left[ D_\Lambda-\Tilde{E}^{(m)}_\Lambda \right]\Theta_Fx\\
    &=\Theta_G^*\left[ D_\Lambda-\Tilde{E}^{(1)}_\Lambda-(D_\Lambda-\Tilde{E}^{(1)}_\Lambda)\Tilde{E}^{(m-1)}_\Lambda \right]\Theta_Fx\\
    &=\Theta_G^*\left[ \left(D_\Lambda-\Tilde{E}^{(1)}_\Lambda\right)\left(D_\Lambda-\Tilde{E}^{(m-1)}_\Lambda\right) \right]\Theta_Fx\\
    &=\Theta_G^*\left[ \left(D_\Lambda-\Tilde{E}^{(1)}_\Lambda\right)^2\left(D_\Lambda-\Tilde{E}^{(m-2)}_\Lambda\right) \right]\Theta_Fx\\
    &\;\;\vdots\\
    &=\Theta_G^*\left(D_\Lambda-\Tilde{E}^{(1)}_\Lambda\right)^m\Theta_Fx\\
    &=\Theta_G^*\left(D_\Lambda-D_\Lambda A^*AD_\Lambda\right)^m\Theta_Fx.
\end{align*}

Thus, we have
\begin{align*}
    \|x-x^{(m)}\|&=\left\| \Theta_G^*\left(D_\Lambda-\Tilde{E}^{(1)}_\Lambda\right)^m\Theta_Fx \right\|\\
    &=\left\| \Theta_G^*\left(D_\Lambda-D_\Lambda A^*AD_\Lambda\right)^m\Theta_Fx \right\|\\
    &\le \| \Theta_G^*\|\,\|D_\Lambda-D_\Lambda A^*AD_\Lambda\|^m\,\|\Theta_F\| \|x\| \\
    &\leq \delta_k^m \sqrt{B_F} \sqrt{B_G}\|x\|,
\end{align*}
where $B_F$, $B_G$ their respective upper frame bounds.

\section{Vanishing $A$-Moment Dilations}

We now turn to the problem of characterizing frames that admit vanishing $A$-moment dilations. The main result of this section establishes several equivalent conditions involving dual frames, operator ranges, and linear independence properties associated with the matrix $A$. These equivalences reveal the underlying geometric structure governing the existence of vanishing $A$-moment dilations. 

Given two finite sequences of vectors $X=\{x_i\}_{i=1}^N
\quad\text{and}\quad Y=\{y_i\}_{i=1}^N,$ we write \(X \preceq Y\) if every linear dependence relation among the
vectors of \(Y\) is also a linear dependence relation among the vectors
of \(X\); that is,
\[
\sum_{i=1}^N c_i y_i=0
\quad\Longrightarrow\quad
\sum_{i=1}^N c_i x_i=0
\]
for every choice of scalars \(c_1,\ldots,c_N\).

Let $F = \{f_j\}_{j=1}^N$ be a finite sequence in a Hilbert space $\mathcal{H}_n$ and let $A = (a_{ij}) \in M_{k\times N}(\mathbb{C}).$ Let $a_1,\dots,a_k \in \mathbb{C}^N$ denote the row vectors of $A$, and let $\Theta_F : \mathcal{H}_n \to \mathbb{C}^N$ be the analysis operator of $F$. Let $P$ be the orthogonal projection of $\mathbb{C}^N$ onto $\operatorname{Ran}(\Theta_F)$. Define
\[
x_i := \sum_{j=1}^N a_{ij} f_j = \Theta_F^* a_i, 
\quad
y_i := P^\perp a_i, 
\quad i=1,\dots,k.
\]
Let $X = \{x_i\}_{i=1}^k$ and $Y = \{y_i\}_{i=1}^k$.

\begin{theorem}\label{equivalancethm3point1}
Let $F=\{f_i\}_{i=1}^N$ be a frame for $\mathcal{H}_n$ and let $A \in \mathbb{C}^{k\times N}$ have full row rank, with row vectors $\{a_i\}_{i=1}^k$. Then the following are equivalent:
\begin{enumerate}
    \item $F$ has the vanishing $A$-moment dilation property.
    
    \item $F$ has a dual frame $G=\{g_i\}_{i=1}^N$ such that $G$ has vanishing $A$-moment.
    
    \item $\mathrm{Row}(A) \cap \mathrm{Ran}(\Theta_F) = \{0\}$.
    
    \item The family $\{P^\perp a_i\}_{i=1}^k$ is linearly independent, where $P$ is the orthogonal projection onto $\mathrm{Ran}(\Theta_F)$.
    
    \item 
     $X \preceq Y$. 
    
\end{enumerate}
\end{theorem}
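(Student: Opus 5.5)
The plan is to reduce every condition to a statement about the synthesis map of the relevant sequence restricted to $\operatorname{Row}(A)$. Throughout, I read the moment conditions in the paper's synthesis form $\sum_j a_{ij} g_j=0$, i.e. $\Theta_G^* a_i=0$, matching $x_i=\Theta_F^*a_i$. In the complex case this may require replacing $A$ by $\overline{A}$ in the analysis-form definition. I expect these conjugation conventions to be the only real source of friction. I will fix them once at the start and then work over $\operatorname{Row}(A)=\operatorname{span}\{a_i\}$. The order of the cycle is (1)$\Leftrightarrow$(2), (2)$\Leftrightarrow$(3), (3)$\Leftrightarrow$(4), (4)$\Leftrightarrow$(5).

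\emph{(1)$\Leftrightarrow$(2).} I will use the explicit correspondence $g_j=S_F^{-1}(f_j+h_j)$. Equivalently, $h_j=S_Fg_j-f_j$. If $\Theta_F^*\Theta_H=0$, then $\Theta_G=\Theta_{F+H}S_F^{-1}$. Hence
\[
\Theta_F^*\Theta_G=(S_F+\Theta_F^*\Theta_H)S_F^{-1}=I,
\]
so $G$ is a dual of $F$. Moreover $\sum_j a_{ij}g_j=S_F^{-1}\sum_j a_{ij}(f_j+h_j)=0$. Conversely, if $G$ is a dual with vanishing $A$-moment, set $h_j=S_Fg_j-f_j$. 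Then $\Theta_F^*\Theta_H=S_F\,\Theta_F^*\Theta_G\cdots$; more precisely, $\Theta_H=\Theta_GS_F-\Theta_F$ gives $\Theta_F^*\Theta_H=S_F-S_F=0$. Also $\sum_j a_{ij}(f_j+h_j)=S_F\sum_j a_{ij}g_j=0$.

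\emph{(2)$\Leftrightarrow$(3).} This is the key step. A dual with vanishing $A$-moment is exactly a linear map $T=\Theta_G^*:\mathbb C^N\to\mathcal H_n$ with $T\Theta_F=I$ and $T|_{\operatorname{Row}(A)}=0$. For necessity, suppose $v=\Theta_Ff\in\operatorname{Row}(A)$. Then $f=T\Theta_Ff=Tv=0$, so $v=0$. For sufficiency, assume the intersection is trivial. Since $\Theta_F$ is injective, define $T$ on the direct sum $\operatorname{Ran}(\Theta_F)\oplus\operatorname{Row}(A)$ by $T(\Theta_Ff+v)=f$. Extend $T$ by $0$ on a complement, and let $G$ be the sequence whose synthesis operator is $T$. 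Then $G$ is a dual frame, because $T\Theta_F=I$ forces $\{g_j\}$ to span $\mathcal H_n$, and $G$ has vanishing $A$-moment.

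\emph{(3)$\Leftrightarrow$(4).} The kernel of $P^\perp|_{\operatorname{Row}(A)}$ is $\operatorname{Row}(A)\cap\operatorname{Ran}(\Theta_F)$. Since $A$ has full row rank, $\{a_i\}$ is a basis of $\operatorname{Row}(A)$. Therefore $\{P^\perp a_i\}$ is linearly independent if and only if $P^\perp$ is injective on $\operatorname{Row}(A)$, which holds if and only if the intersection is $\{0\}$.

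\emph{(4)$\Leftrightarrow$(5).} If $Y$ is linearly independent, then $X\preceq Y$ holds vacuously. Conversely, assume $X\preceq Y$ and $\sum_i c_iP^\perp a_i=0$. Then $v:=\sum_i c_ia_i\in\operatorname{Ran}(\Theta_F)$, say $v=\Theta_Ff$. The hypothesis gives $\sum_ic_ix_i=\Theta_F^*v=S_Ff=0$. Since $S_F$ is invertible, $f=0$, hence $v=0$. Full row rank then forces $c=0$, so $Y$ is linearly independent.
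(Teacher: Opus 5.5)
Your proof is correct. For (1)$\Leftrightarrow$(2), (2)$\Leftrightarrow$(3), (3)$\Leftrightarrow$(4) and (4)$\Rightarrow$(5) it coincides with the paper's argument:
\begin{itemize}
\item the same correspondence $g_j=S_F^{-1}(f_j+h_j)$;
\item the same map $\Theta_Ff+w\mapsto f$ on $\operatorname{Ran}(\Theta_F)\oplus\operatorname{Row}(A)$;
\item the same kernel identity for $P^\perp|_{\operatorname{Row}(A)}$.
\end{itemize}
Your synthesis-form reading $\Theta_G^*a_i=0$ is also exactly what the paper's proof uses, even though its definition of vanishing $A$-moment is phrased in analysis form.

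The only real difference is how you close the cycle. The paper proves (5)$\Rightarrow$(1) constructively. Since $X\preceq Y$, the map $L:y_i\mapsto x_i$ is well defined. Then $\Theta_H^*:=-LP^\perp$ annihilates $\operatorname{Ran}(\Theta_F)$ and sends $a_i$ to $-x_i$, so it is an explicit dilation. You instead prove (5)$\Rightarrow$(4): a relation $\sum_i c_iy_i=0$ gives $\sum_i c_ia_i=\Theta_Ff$, and $X\preceq Y$ then forces $S_Ff=0$.

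Each route buys something different. Yours is shorter and shows that, under full row rank, (5) is just a restatement of (4). The reason is that no nontrivial relation among the $y_i$ can ever be a relation among the $x_i$, because $\Theta_F^*$ is injective on $\operatorname{Ran}(\Theta_F)$. The paper's route instead produces a formula for the dilating sequence $H$ directly from (5). That implication needs neither the frame hypothesis nor full row rank.
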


\begin{proof}

\textbf{(1) $\Leftrightarrow$ (2):} Assume $F$ has the vanishing $A$-moment dilation property. Then there exists a sequence $H=\{h_i\}_{i=1}^N$ such that $\Theta_F^* \Theta_H = 0
\quad \text{and} \quad
A[f_1+h_1, \dots, f_N+h_N]^T = 0.$ Now define
\[
g_i = S_F^{-1}(f_i + h_i), \quad 1 \leq i \leq N.
\]
Then $\{g_i\}_{i=1}^N$ is a dual of $F$. Moreover,
\[
A[g_1,\dots,g_N]^T 
= A[S_F^{-1}(f_1+h_1),\dots,S_F^{-1}(f_N+h_N)]^T = 0.
\]
Thus $G$ has vanishing $A$-moment.\\
Conversely, assume $F$ has a dual $G=\{g_i\}_{i=1}^N$ with vanishing $A$-moment. Let $g_i = S_F^{-1} f_i + h_i,$ where $\Theta_F^* \Theta_H = 0$. Then $A[g_1,\dots,g_N]^T = 0$ implies that $A[f_1 + S_F h_1,\dots,f_N + S_F h_N]^T = 0.$ Therefore, $F$ has vanishing $A$-moment dilation property.

\textbf{(2) $\Leftrightarrow$ (3):} Assume $F$ has a dual $G$ with vanishing $A$-moment. Then $\Theta_G^* a_i = \sum_{j=1}^N a_{ij} g_j = 0.$ If $w \in \mathrm{Row}(A) \cap \mathrm{Ran}(\Theta_F)$, then
$w = \Theta_F f = \sum_{i=1}^k c_i a_i,$ for some scaler $c_i.$ This gives $f = \Theta_G^* \Theta_F f = \sum_{i=1}^k c_i \Theta_G^* a_i = 0,$ and hence  $w=0$. \\
Conversely, assume $\mathrm{Row}(A) \cap \mathrm{Ran}(\Theta_F) = \{0\}$. Define a linear map
\[
L: \mathrm{Ran}(\Theta_F) \oplus \mathrm{Row}(A) \to \mathcal{H}
\]
by
\[
L(\Theta_F f + w) = f.
\]
This is well-defined due to the trivial intersection. Extend $L$ linearly to $\mathbb{C}^N$ and take $g_i = L e_i.$ Then $\{g_i\}_{i=1}^N$ is a dual of $F$ and
\[
\sum_{j=1}^N a_{ij} g_j = L\left(\sum_{j=1}^N a_{ij} e_j\right) = L(a_i) = 0.
\]
Thus $G$ has vanishing $A$-moment.

\textbf{(3) $\Leftrightarrow$ (4):} Note that $\ker\big(P^\perp|_{\mathrm{Row}(A)}\big) = \mathrm{Row}(A) \cap \mathrm{Ran}(\Theta_F).$ Hence the intersection is trivial if and only if $\{P^\perp a_i\}_{i=1}^k$ is linearly independent.

\textbf{(4) $\Leftrightarrow$ (5):} Suppose $\sum_{i=1}^k c_i y_i = 0.$ Then $P^\perp \left(\sum_{i=1}^k c_i a_i \right)=0.$ This gives 
$\sum_{i=1}^k c_i a_i \in \mathrm{Row}(A) \cap \mathrm{Ran}(\Theta_F).$ Thus $\sum c_i a_i=0$ and hence $\sum_{i=1}^k c_i x_i = \sum_{i=1}^k c_i \sum_{j=1}^N a_{ij} f_j = 0.$ So $X \preceq Y$.\\
Conversely, if $X \preceq Y$, define a linear map
\[
L:\mathrm{span}\{y_i\} \to \mathrm{span}\{x_i\}, \quad y_i \mapsto x_i.
\]
Extend $L$ linearly and define
\[
\gamma := -L P^\perp : \mathbb{C}^N \to \mathcal{H}_n.
\]
Then $\gamma = \Theta_H^*$ for some sequence $H=\{h_i\}$. It is easy to verify that $\Theta_H^* \Theta_F = 0$ and $A[f_1 + h_1, \dots, f_N + h_N]^T = 0.$ Thus $F$ has vanishing $A$-moment dilation property.

\end{proof}

\begin{example}
Let
\[
F=\left\{
\begin{pmatrix}1\\0\end{pmatrix},
\begin{pmatrix}0\\1\end{pmatrix},
\begin{pmatrix}1\\1\end{pmatrix},
\begin{pmatrix}1\\-1\end{pmatrix}
\right\}
\]
be a frame for $\mathbb R^2$, and let
\[
A=
\begin{pmatrix}
1&0&0&0\\
0&1&0&0
\end{pmatrix}.
\]
The analysis operator of $F$ is
\[
\Theta_F=
\begin{pmatrix}
1&0\\
0&1\\
1&1\\
1&-1
\end{pmatrix}.
\]
It follows that $\operatorname{Row}(A)\cap\operatorname{Ran}(\Theta_F)=\{0\}.$ Hence, by Theorem~\ref{equivalancethm3point1}, $F$ admits a dual frame with vanishing
$A$-moment and also possesses a vanishing $A$-moment dilation.

Indeed, one such dual frame is
\[
G=\left\{
0,\,0,\,
\frac12\begin{pmatrix}1\\1\end{pmatrix},
\frac12\begin{pmatrix}1\\-1\end{pmatrix}
\right\}.
\]
Clearly,
\[
A[g_1,g_2,g_3,g_4]^T=0.
\]
On the other hand,
\[
A[f_1,f_2,f_3,f_4]^T\neq0,
\]
so the original frame itself does not have vanishing $A$-moment.
Thus, this example illustrates that the vanishing $A$-moment
dilation property may hold even when the original frame does not
have vanishing $A$-moment.
\end{example}

The following proposition establishes a direct connection between the robustness of a frame against erasures and the algebraic structure of the matrix encoding its linear dependencies. In particular, when the number of erased coefficients equals the excess of the frame, robustness is characterized by the nonsingularity of the corresponding square submatrices of the matrix $A.$

\begin{proposition}\label{existanceprop3.3}
Let $F=\{f_i\}_{i=1}^{N}$ be a frame for an $n$-dimensional Hilbert space
$\mathcal H_n$, $N>n$ and 
\[
\Phi=
\begin{bmatrix}
f_{1}^t
\ldots
f_{N}^t
\end{bmatrix}^t.
\]
 Let $r=N-n.$ Then there exists a matrix $A\in\mathbb C^{r\times N}$ such that $A\Phi=0.$ Moreover, every $r\times r$  sub-matrix of $A$ is invertible if and only if
$F$ is $r$-erasure robust.
\end{proposition}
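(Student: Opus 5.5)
Since $F$ is a frame, $\Phi\in\mathbb C^{N\times n}$ has rank $n$. Hence the left null space $\{a\in\mathbb C^{N}: a^t\Phi=0\}$, which is $\ker(\Phi^t)$, has dimension exactly $N-n=r$. Choose a basis $a_1,\dots,a_r$ of this space and let $A$ be the $r\times N$ matrix whose rows are the $a_i^t$. Then $A\Phi=0$; equivalently $\sum_j a_{ij}f_j=0$ for every $i$. Moreover $A$ has full row rank $r$, and its rows span all linear dependence relations among the $f_j$: a vector $c$ satisfies $\sum_j c_jf_j=0$ if and only if $c\in\operatorname{Row}(A)$. This settles existence. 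We interpret ``$F$ is $r$-erasure robust'' in the standard way: for every $\Lambda\subset\{1,\dots,N\}$ with $|\Lambda|=r$, the family $\{f_j\}_{j\in\Lambda^c}$ is still a frame for $\mathcal H_n$. Since $|\Lambda^c|=n$, this means $\{f_j\}_{j\in\Lambda^c}$ is a basis, so $r$-erasure robustness is the same as $F$ being full spark.

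The heart of the argument is a column-wise duality. Fix $\Lambda$ with $|\Lambda|=r$. We claim that $A_\Lambda$, the $r\times r$ submatrix of $A$ on the columns in $\Lambda$, is invertible if and only if $\{f_j\}_{j\in\Lambda^c}$ is linearly independent.

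First suppose $\{f_j\}_{j\in\Lambda^c}$ is dependent. Then there is a nonzero $c$ supported in $\Lambda^c$ with $\sum_j c_jf_j=0$. So $c\in\operatorname{Row}(A)$, and we can write $c=A^t d$ for some $d\neq 0$, using that $A$ has full row rank. Restricting to the coordinates in $\Lambda$ gives $A_\Lambda^t d=c_\Lambda=0$, so $A_\Lambda$ is singular.

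Conversely suppose $A_\Lambda$ is singular. Pick $d\neq0$ with $A_\Lambda^t d=0$ and set $c=A^td$. This $c$ is nonzero because $A$ has full row rank, and it vanishes on $\Lambda$. Since $c\in\operatorname{Row}(A)$, we have $\sum_{j\in\Lambda^c}c_jf_j=0$, which is a nontrivial dependence among the $n$ vectors $\{f_j\}_{j\in\Lambda^c}$.

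Applying this equivalence over all $\Lambda$ with $|\Lambda|=r$ gives the statement: every $r\times r$ submatrix of $A$ is invertible if and only if every $n$-subset of $F$ is a basis, that is, if and only if $F$ is $r$-erasure robust. The exact-recovery discussion of Section 2 gives an alternative route for one direction: if $A_\Lambda$ is invertible, the erased coefficients satisfy $c_\Lambda=-A_\Lambda^{-1}A_{\Lambda^c}c_{\Lambda^c}$, so $x$ is determined by the surviving coefficients.

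The main obstacle is fixing the meaning of ``$r$-erasure robust''. One must also note that the ``if and only if'' requires $A$ to be chosen with rows forming a basis of all dependence relations. If $A$ were only some matrix with $A\Phi=0$, the backward direction could fail, for example when $A$ is rank-deficient. I would therefore state explicitly that $A$ is taken of full row rank $r$, as in Theorem~\ref{equivalancethm3point1}. With that choice, the result does not depend on which basis of the left null space is used, since changing basis multiplies every $A_\Lambda$ on the left by the same invertible matrix.
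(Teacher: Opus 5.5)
Your proof is correct. It shares the paper's skeleton: the same $A$ built from a basis of the left null space of $\Phi$, the same subset-by-subset claim, and the same aggregation over subsets. Its key step, however, is the transpose of the paper's.

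The paper works with the right kernel of $A_J$. Using $\ker(A)=\operatorname{Ran}(\Phi)$, it shows $A_J$ is singular iff some nonzero coefficient vector $\Phi x$ is supported on $J$, i.e.\ iff $\{f_i\}_{i\in J^c}$ fails to span $\mathcal H_n$. You work with the left kernel of $A_\Lambda$. Using that $\operatorname{Row}(A)$ is exactly the space of dependence relations, you show $A_\Lambda$ is singular iff a nonzero relation is supported on $\Lambda^c$, i.e.\ iff $\{f_j\}_{j\in\Lambda^c}$ is dependent.

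\begin{itemize}
\item The paper's route lands directly on the spanning condition that defines erasure robustness. It also extends verbatim to $|J|=s<r$: the $r\times s$ matrix $A_J$ has full column rank iff $\{f_i\}_{i\in J^c}$ still spans, which matches the full-column-rank condition in the exact-recovery discussion of Section~2.
\item Your route lands on linear independence (full spark). It relies on $|\Lambda^c|=n$ to convert independence into spanning, which you do explicitly.
\item Your formulation involves no inner products, so it avoids the conjugation slip in the paper's identification $c_i=\langle x,f_i\rangle$ for $c=\Phi x$. That identification is only literally correct over $\mathbb R$; the slip is harmless, since $\{f_i\}$ and $\{\overline{f_i}\}$ span spaces of the same dimension.
\end{itemize}

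Your caveat about full row rank is accurate and sharper than the statement as written. The implication ``all $r\times r$ submatrices invertible $\Rightarrow$ robust'' holds for any $A$ with $A\Phi=0$, because a single invertible $A_\Lambda$ already forces $\operatorname{rank}A=r$. The converse fails for rank-deficient $A$. In the paper this dependence is hidden in the identity $\ker(A)=\operatorname{Ran}(\Phi)$.
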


\begin{proof}
Since $F$ is a frame for $\mathcal H_n$, \,$\operatorname{rank}(\Phi)=n.$ Hence $\dim\ker(\Phi^*)=N-n=r.$ Choose a basis $\{a_1^*,\ldots,a_r^*\}$ of $\ker(\Phi^*)$, and define
\[
A=
\begin{bmatrix}
a_{1}^t\\
\vdots\\
a_{r}^t
\end{bmatrix}.
\]
Since $\Phi^*a_j^*=0,\quad j=1,\ldots,r,$ we obtain $A\Phi=0.$ Now let
\[
J\subset\{1,\ldots,N\},
\qquad |J|=r,
\]
and denote by $A_J$ the $r\times r$ submatrix obtained by keeping the columns indexed by $J$.

We first prove that
\[
\det(A_J)=0
\iff
\operatorname{span}\{f_i:i\in J^c\}\neq\mathcal H_n.
\]

Suppose first that $\det(A_J)=0.$ Then there exists a nonzero vector $u\in\mathbb C^{r}$ such that $A_Ju=0.$ Extend $u$ to a vector $c=(c_i)_{i=1}^{N}\in\mathbb C^{N},$ by setting
\[
c_i=
\begin{cases}
u_i,& i\in J,\\
0,& i\in J^c.
\end{cases}
\]

Then $Ac=0.$ Since the rows of $A$ form a basis of $\ker(\Phi^*)$, we have
\[
\ker(A)=\bigl(\ker(\Phi^*)\bigr)^\perp
=\operatorname{Ran}(\Phi).
\]
Hence $c=\Phi x,$ for some nonzero vector $x\in\mathcal H_n$.
Therefore, $c_i=\langle x,f_i\rangle.$ Since $c_i=0$ for every $i\in J^c$, it follows that $\langle x,f_i\rangle=0,
\quad i\in J^c.$ Thus $x$ is orthogonal to every vector in $\{f_i:i\in J^c\},$ which implies $\operatorname{span}\{f_i:i\in J^c\}
\neq
\mathcal H_n.$

Conversely, suppose $\operatorname{span}\{f_i:i\in J^c\}
\neq
\mathcal H_n.$ Then there exists a nonzero vector $x\in\mathcal H_n$ such that $\langle x,f_i\rangle=0,
\quad i\in J^c.$ Set $c=\Phi x.$ Since $\Phi$ has full column rank,  $c\neq0.$ Moreover, $c_i=0,
\quad i\in J^c.$ Also, $c\in\operatorname{Ran}(\Phi)=\ker(A),$ so $Ac=0.$ Since $c$ is supported on $J$, the columns of $A_J$ are linearly dependent. Hence $\det(A_J)=0.$ Since $|J|=r=N-n,$ we have $|J^c|=n.$ Thus every $r\times r$ column submatrix of $A$ is invertible if and only if every $n$-element subset of $F$ spans $\mathcal H_n$, which is equivalent to $F$ being $r$-erasure robust.
\end{proof}

For instance, over $\mathbb R$ one may systematically choose distinct
integer values for $t_1,\ldots,t_r$. Since every square minor of
$V(t_1,\ldots,t_r)A_0$ is a nonzero polynomial in the parameters
$t_1,\ldots,t_r$, only a proper algebraic set of parameter choices
can cause one of these minors to vanish. Consequently, suitable
distinct choices of $t_1,\ldots,t_r$ exist, and the above procedure
produces a totally nonsingular relation matrix.

\begin{lemma}
Let $X,Y,Z$ be vector spaces, and let $S:X\to Y,\quad T:X\to Z$ be linear maps. Then there exists a linear map $M:Y\to Z$ such that $T=MS$
if and only if $\ker(S)\subseteq \ker(T).$
\end{lemma}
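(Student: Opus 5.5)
The plan is to prove the two directions separately. The necessity direction is immediate from composition. The sufficiency direction reduces to defining $M$ on $\operatorname{Ran}(S)$ and then extending it to all of $Y$ using a complementary subspace.

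\emph{Necessity.} Suppose $T=MS$ and take $x\in\ker(S)$. Then $Tx=M(Sx)=M0=0$, so $\ker(S)\subseteq\ker(T)$.

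\emph{Sufficiency.} Assume $\ker(S)\subseteq\ker(T)$.

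First define $M_0:\operatorname{Ran}(S)\to Z$ by $M_0(Sx)=Tx$. This is well defined: if $Sx=Sx'$, then $x-x'\in\ker(S)\subseteq\ker(T)$, so $Tx=Tx'$. Linearity of $M_0$ follows from linearity of $S$ and $T$. Indeed, $\alpha Sx+\beta Sx'=S(\alpha x+\beta x')$ is sent to $T(\alpha x+\beta x')=\alpha Tx+\beta Tx'$.

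Next, choose a complement $W$ of $\operatorname{Ran}(S)$ in $Y$, so that $Y=\operatorname{Ran}(S)\oplus W$. In the finite-dimensional setting of the paper, extend a basis of $\operatorname{Ran}(S)$ to a basis of $Y$. In general, the same extension works via Zorn's lemma, or one can take the orthogonal complement when $Y$ is an inner product space such as $\mathbb{C}^N$. Define $M(u+w)=M_0u$ for $u\in\operatorname{Ran}(S)$ and $w\in W$; equivalently, set $M=0$ on $W$. Then $M$ is linear and $MSx=M_0Sx=Tx$ for every $x\in X$, so $T=MS$.

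\emph{Main obstacle.} The only delicate points are the well-definedness of $M_0$, which is exactly where the kernel inclusion is used, and the existence of a complement for the extension. Both are standard linear algebra. This is the same construction already used in the proofs of (3)$\Rightarrow$(2) and (5)$\Rightarrow$(1) in Theorem~\ref{equivalancethm3point1}, where a map was defined on a subspace and then extended linearly to $\mathbb{C}^N$.
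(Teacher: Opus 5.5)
Your proof is correct and complete. Necessity follows by composition. For sufficiency, you define $M_0(Sx)=Tx$ on $\operatorname{Ran}(S)$, which is well defined exactly because $\ker S\subseteq\ker T$, and then extend by zero on a complement of $\operatorname{Ran}(S)$ in $Y$; you also correctly note that a complement exists in general.

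The paper states this lemma without proof, treating it as standard, so there is no paper argument to compare against. The lemma is used in the proof of Theorem~\ref{robustthm3point5}, with $S=\Theta_{F_J}$ and $T=I_{\mathcal H_n}-L\Theta_{F_I}$. There $\ker S=W_J^\perp\subseteq\ker T$ is exactly what the construction of $L$ guarantees, so your argument supplies the justification for that step.
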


The following theorem shows that every finite frame admits a dual frame with the strongest possible robustness permitted by its redundancy. More precisely, if $N=n+k,$ then one can always choose a dual frame that remains a frame after the erasure of any $k$ elements.

\begin{theorem}\label{robustthm3point5}
Let $F=\{f_i\}_{i=1}^{N}$ be a frame for an $n$-dimensional Hilbert space $\mathcal H_n$, where $N=n+k$. Then $F$ has a dual frame $G=\{g_i\}_{i=1}^{N}$ which is $k$-erasure robust. 
\end{theorem}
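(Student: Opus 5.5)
The plan is to reduce the statement to Theorem~\ref{equivalancethm3point1} and Proposition~\ref{existanceprop3.3}, applied to the dual frame rather than to $F$. By Proposition~\ref{existanceprop3.3} applied to $G$, a frame $G$ with $N=n+k$ vectors is $k$-erasure robust if and only if some (equivalently every) matrix $A\in\mathbb C^{k\times N}$ has both of the following properties:
\begin{itemize}
\item its rows span the full $k$-dimensional space of linear relations among $g_1,\dots,g_N$;
\item every $k\times k$ column submatrix of $A$ is invertible.
\end{itemize}
So the goal is to produce such an $A$ whose relation space can be realized by a dual of $F$.

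\textbf{Step 1: choose $A$ generically.} I look for $A\in\mathbb C^{k\times N}$ satisfying two conditions at once.
\begin{itemize}
\item[(a)] Every $k\times k$ minor of $A$ is nonzero.
\item[(b)] $\mathrm{Row}(A)\cap\mathrm{Ran}(\Theta_F)=\{0\}$.
\end{itemize}
Condition (a) forces $A$ to have full row rank.

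Each minor in (a) is a polynomial in the entries of $A$. It is not identically zero, because the Vandermonde matrix with distinct nodes discussed earlier has all such minors nonzero.

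For (b), fix a basis $u_1,\dots,u_n$ of $\mathrm{Ran}(\Theta_F)$. Then (b) together with full row rank is equivalent to the $N\times N$ matrix with columns $u_1,\dots,u_n,a_1,\dots,a_k$ being nonsingular, since $n+k=N$. Its determinant is again a polynomial in the entries of $A$. It is not identically zero: choose the $a_i$ to be a basis of any algebraic complement of $\mathrm{Ran}(\Theta_F)$.

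Since $\mathbb C^{k\times N}$ is not a finite union of proper algebraic subsets, some $A$ satisfies (a) and (b) simultaneously. If one prefers to work over $\mathbb R$, the same argument uses real entries and real polynomials, which are nonzero at some real point.

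\textbf{Step 2: get a dual with these relations.} By (b) and Theorem~\ref{equivalancethm3point1} (3)$\Rightarrow$(2), $F$ has a dual frame $G$ with vanishing $A$-moment, that is, $\Theta_G^*a_i=\sum_j a_{ij}g_j=0$ for all $i$.

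Since $G$ is a dual frame, $\Theta_G^*$ is onto $\mathcal H_n$. Hence $\dim\ker\Theta_G^*=N-n=k$. As $\mathrm{Row}(A)\subseteq\ker\Theta_G^*$ and $\dim\mathrm{Row}(A)=k$, we conclude $\ker\Theta_G^*=\mathrm{Row}(A)$. Thus $A$ is exactly a relation matrix for $G$: its rows form a basis of the linear relations among the $g_j$.

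\textbf{Step 3: conclude, handling the conjugation bookkeeping.} The step I expect to need care is matching conventions, since Proposition~\ref{existanceprop3.3} is phrased through $A\Phi=0$ with a basis of $\ker(\Phi^*)$.

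I would re-run the argument in the proof of Proposition~\ref{existanceprop3.3} directly for $G$. If some $k\times k$ submatrix $A_J$ were singular, there would be a nonzero $c$ supported on $J$ with $Ac=0$. Equivalently, a nonzero functional supported on $J$ would annihilate the relation space $\mathrm{Row}(A)=\ker\Theta_G^*$. This happens exactly when the coordinate vector $c$ (up to entrywise conjugation, which preserves support and the vanishing of minors) lies in $(\ker\Theta_G^*)^\perp=\mathrm{Ran}(\Theta_G)$. So $c=\Theta_G x$ for some $x\neq 0$ orthogonal to all $g_i$ with $i\in J^c$. This would mean $\{g_i\}_{i\in J^c}$ fails to span $\mathcal H_n$.

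By (a) no such $J$ exists. Hence every $n$-element subfamily of $G$ spans $\mathcal H_n$, which is precisely $k$-erasure robustness of $G$.
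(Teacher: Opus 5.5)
Your proof is correct, and it takes a different route from the paper's.

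The paper runs the genericity argument on the affine space of all duals of $F$. For each $n$-subset $I$, it first constructs explicitly one dual whose $I$-indexed vectors form a basis. It does this by inverting $\Theta_{F_I}$ on $W_J^\perp$, extending to an invertible $L$, and factoring $I-L\Theta_{F_I}$ through $\Theta_{F_J}$. This makes $D_I(G)=\det(g_i)_{i\in I}$ a nonzero polynomial on that space, and Lemma~\ref{lem3point5} finishes.

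You instead run the genericity on the linear space $\mathbb C^{k\times N}$ of candidate relation matrices, where each polynomial is obviously nonzero. You then use two facts:
\begin{enumerate}
\item A dual of $F$ is determined by the complement $\ker\Theta_G^*=\operatorname{Row}(A)$ of $\operatorname{Ran}(\Theta_F)$, which is what Theorem~\ref{equivalancethm3point1}, (3)$\Rightarrow$(2), plus your dimension count deliver.
\item The orthogonality $\operatorname{Ran}(\Theta_G)=(\ker\Theta_G^*)^\perp$, as in the proof of Proposition~\ref{existanceprop3.3}, transfers the full-spark property from $A$ to $G$.
\end{enumerate}
Your handling of the entrywise conjugation in that transfer is right, and the equivalence you state covers the needed direction ($\{g_i\}_{i\in J^c}$ not spanning $\Rightarrow$ $A_J$ singular).

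What each route buys: the paper's argument is self-contained and never needs the kernel/range transfer. Yours avoids the per-subset construction and fits the paper's vanishing-moment theme more closely, since the robust dual comes with an explicit full-spark matrix $A$ for which it has vanishing $A$-moment. Moreover, if you add every $s\times s$ minor of $A$ ($1\le s\le k$) to the generic conditions in Step 1, you get directly a dual with vanishing $A$-moment for a totally nonsingular $A$. Each such minor is the determinant of an array of distinct indeterminates, hence a nonzero polynomial. This is the claim made in the abstract, obtained without going through the later theorem that $m$-erasure robustness yields a totally nonsingular relation matrix.
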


In order to prove the above Theorem, we make use of the following lemmas.

\begin{lemma}\label{lem3point5}
Let $V$ be a finite-dimensional affine space over $\mathbb{R}$ or $\mathbb{C}$, and let $Z_1,\dots,Z_m \subsetneq V$
be proper algebraic subsets. Then $Z_1\cup Z_2\cup\cdots\cup Z_m \neq V.$
\end{lemma}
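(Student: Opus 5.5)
The statement immediately above is Lemma~\ref{lem3point5}: a finite union of proper algebraic subsets of $V$ cannot exhaust $V$. My plan is to reduce it to the fact that a nonzero polynomial over an infinite field does not vanish identically on $\mathbb F^d$, and then pass to products of polynomials.

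Fix an affine identification $V\cong\mathbb F^d$, with $\mathbb F=\mathbb R$ or $\mathbb C$. By definition each algebraic subset $Z_j$ is the common zero set of a family of polynomials. Since $Z_j\neq V$, at least one polynomial $p_j$ in that family is not identically zero as a function on $V$. Hence $Z_j\subseteq \{p_j=0\}$ with $p_j\not\equiv 0$, and it suffices to show
\[
\bigcup_{j=1}^m \{p_j=0\}\neq V .
\]

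Now set $p=p_1p_2\cdots p_m$. Then $\bigcup_j\{p_j=0\}=\{p=0\}$. The polynomial ring $\mathbb F[x_1,\dots,x_d]$ is an integral domain, so $p$ is a nonzero polynomial. The lemma therefore follows once we know that a nonzero polynomial cannot vanish at every point of $\mathbb F^d$.

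I prove this by induction on $d$. For $d=1$, a nonzero polynomial in one variable has only finitely many roots, while $\mathbb F$ is infinite. For the inductive step, write
\[
p=\sum_{k} q_k(x_1,\dots,x_{d-1})\,x_d^k
\]
with some $q_k\neq 0$. By induction there is a point $a\in\mathbb F^{d-1}$ with $q_k(a)\neq0$. Then $p(a,x_d)$ is a nonzero one-variable polynomial, so it has a non-root $t$, and $p(a,t)\neq0$.

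The only delicate point is the first reduction: we must choose $p_j$ nonvanishing as a \emph{function}, not merely as a formal polynomial. Over an infinite field these notions coincide by the same induction, so this causes no problem. Everything else is routine.
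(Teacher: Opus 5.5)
Your proposal is correct and follows the paper's proof: pick a polynomial $p_j$ that is not identically zero with $Z_j\subseteq\{p_j=0\}$, form the product $p=p_1\cdots p_m$, and use that a nonzero polynomial over $\mathbb{R}$ or $\mathbb{C}$ cannot vanish on all of $V$. The only difference is that you prove this last fact by induction on the dimension and spell out the formal-versus-functional nonvanishing point, whereas the paper simply asserts it.
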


\begin{proof}
For each $i$, there exists a nonzero polynomial $p_i$ on $V$ such that $Z_i \subseteq \{x\in V : p_i(x)=0\}.$ Let $p=p_1p_2\cdots p_m.$ Since $p$ is a nonzero polynomial, it cannot vanish identically on $V$. Hence there exists
$x\in V$ with $p(x)\neq0$, which implies $p_i(x)\neq0 \quad \text{for all } i.$ Therefore, $x\notin Z_i \qquad \text{for all } i,$ and consequently $x\notin \bigcup_{i=1}^m Z_i.$ Thus,
$\bigcup_{i=1}^m Z_i \neq V.$
\end{proof}
\begin{proof}[\textbf{Proof of Theorem \ref{robustthm3point5}}]

We first show the following claim.

\medskip

\noindent
\textit{Claim:}
For every subset $I\subset\{1,\ldots,N\}$ with $|I|=n$, there exists a dual frame
$G=\{g_i\}_{i=1}^{N}$ of $F$ such that $\{g_i\}_{i\in I}$ is a basis for
$\mathcal H_n$.

\medskip

Fix such a subset $I$ and write $J=I^c$. Let $F_I=\{f_i\}_{i\in I}, \quad F_J=\{f_i\}_{i\in J}.$ Let
\[
W_J=\operatorname{span}\{f_j:j\in J\}.
\]
We want to construct a dual $G$ whose vectors indexed by $I$ form a basis.

Let $U=W_J^\perp.$ If $x\in U$ and $\langle x,f_i\rangle=0,
\quad \text{for all } i\in I,$ then also $\langle x,f_j\rangle=0, \quad \text{for all } j\in J,$ because $x\in W_J^\perp$. Hence $x$ is orthogonal to every vector of the frame $F$. Since $F$ spans $\mathcal H_n$, this implies $x=0$.
Therefore the restricted analysis map
\[
\Theta_{F_I}:U\to \mathbb C^n
\]
is injective. Define a linear map on the subspace $\Theta_{F_I}(U)\subseteq\mathbb C^n$ by
\[
L(\Theta_{F_I}x)=x,
\qquad x\in U.
\]
This is well-defined and injective. Since $\dim\mathcal H_n=n$, we may extend
$L$ to an invertible linear map
\[
L:\mathbb C^n\to\mathcal H_n.
\]
Let the columns of $L$ be $\{g_i\}_{i\in I}$. Since $L$ is invertible, the family $\{g_i\}_{i\in I}$ is a basis for $\mathcal H_n$. Now define
\[
T=I_{\mathcal H_n}-L\Theta_{F_I}.
\]
For every $x\in U=W_J^\perp$, by construction, $L\Theta_{F_I}x=x.$ Hence $Tx=0 \quad \text{for all }x\in W_J^\perp.$ Thus $T$ depends only on the component of a vector in $W_J$. Since
$\Theta_{F_J}$ is injective on $W_J$, there exists a linear operator
\[
M:\mathbb C^{|J|}\to \mathcal H_n
\]
such that
\[
M\Theta_{F_J}=T.
\]
Let the columns of $M$ be $\{g_j\}_{j\in J}$. Then for every $x\in\mathcal H_n$,
\[
\sum_{i\in I}\langle x,f_i\rangle g_i
+
\sum_{j\in J}\langle x,f_j\rangle g_j
=
L\Theta_{F_I}x+M\Theta_{F_J}x =x,
\]
Since $M\Theta_{F_J}=T=I-L\Theta_{F_I}.$  Therefore $G=\{g_i\}_{i=1}^{N}$ is a dual frame of $F$, and by construction
$\{g_i\}_{i\in I}$ is a basis. This proves the claim.

\medskip

Now we prove the theorem. The set of all dual frames of $F$ forms an affine space. For each
subset $I\subset\{1,\ldots,N\}$ with $|I|=n$, consider the determinant
\[
D_I(G)=\det(g_i)_{i\in I},
\]
where the columns are the vectors $g_i$, $i\in I$, written in a fixed basis of
$\mathcal H_n$. The condition $D_I(G)\neq 0$ is exactly the condition that $\{g_i\}_{i\in I}$ is a basis. By the claim, for each $I$ there exists at least one dual frame $G_{I}$ such that
$D_I(G_{I})\neq 0$. Hence $D_I$ is not the zero polynomial on the affine space of dual frames. There are only finitely many subsets $I$ with $|I|=n$. Therefore the product
\[
D_{I}=\prod_{|I|=n}D_I(G_{I})
\]
is also a nonzero polynomial. For each $I,$  let us define
\[Z_I = \{G \in D(F) : D_{I}(G) = 0 \}.\]
Since, $D_{I}$ is not the zero polynomial, $Z_{I}$ is a proper algebraic subset. Since there are only finitely many $I,$ by Lemma \ref{lem3point5}, $\bigcup\limits_{|I| =n}Z_{I} \neq D_{F}.$
Hence there exists a dual frame $G \in D(F) \setminus \cup_{|I| =n}Z_{I} $ such that  $D_I(G)\neq 0,\quad \text{for every } I\subset\{1,\ldots,N\},\ |I|=n.$ Thus every $n$ vectors of $G$ form a basis for $\mathcal H_n$. Therefore $G$ is $k-$erasure robust dual frame of $F$.
\end{proof}

The following example illustrates that a frame need not itself be maximally robust to erasures in order to admit a dual frame with maximal robustness. In particular, it demonstrates the conclusion of Theorem~\ref{robustthm3point5} in a simple finite-dimensional setting.
\begin{example}
Consider the frame
\[
F=\{f_1,f_2,f_3,f_4\}
=
\left\{
\begin{pmatrix}1\\0\end{pmatrix},
\begin{pmatrix}1\\0\end{pmatrix},
\begin{pmatrix}0\\1\end{pmatrix},
\begin{pmatrix}0\\1\end{pmatrix}
\right\}
\]
for $\mathbb R^2$. Here $N=4$, so Theorem \ref{robustthm3point5} guarantees the
existence of a dual frame which is $2$-erasure robust. Indeed, consider
\[
G=\{g_1,g_2,g_3,g_4\} = \left\{\begin{pmatrix}1\\1\end{pmatrix}, \begin{pmatrix}0\\-1\end{pmatrix} , \begin{pmatrix}1\\3\end{pmatrix},  \begin{pmatrix}-1\\-2\end{pmatrix}  \right\}.
\]

Since
\[
g_1+g_2=\begin{pmatrix}1\\0\end{pmatrix},
\qquad
g_3+g_4=\begin{pmatrix}0\\1\end{pmatrix},
\]
it follows immediately that $G$ is a dual frame of $F$.

Moreover, every pair of vectors in $G$ is linearly independent.
Hence $G$ is full spark and therefore remains a frame after the
erasure of any two elements. Thus $G$ is a $2$-erasure robust
dual of $F$.

Note that $F$ itself is not $2$-erasure robust, since, for
example, $\{f_1,f_2\}$ is linearly dependent. This illustrates
that even a frame which is not maximally robust may possess a
dual frame having the maximal robustness guaranteed by
Theorem~\ref{robustthm3point5}.
\end{example}

The full-spark property provides a natural characterization of maximal robustness of finite frames against erasures. The following result strengthens this connection by showing that $m-$erasure robustness can be characterized through a matrix of linear dependencies having a much stronger nonsingularity property. In particular, the relation matrix can be chosen to be totally nonsingular, so that every square submatrix is invertible.

\begin{theorem}
Let $F=\{f_j\}_{j=1}^{N}$ be a frame for an $n$-dimensional
Hilbert space $\mathcal H_n$, and let $1\leq m\leq N-n$.
Then the following statements are equivalent:
\begin{enumerate}
    \item $F$ is robust to $m$ erasures.

    \item There exists an $m\times N$ matrix $A=(a_{ij})$ such that $ \sum_{j=1}^{N} a_{ij}f_j=0,
    \quad i=1,\ldots,m,$
    and $A$ is totally non-singular i.e.  every square
    submatrix of $A$ is non-singular.
\end{enumerate}
\end{theorem}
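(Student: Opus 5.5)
We will prove both directions by reducing $m$-erasure robustness to a rank condition on the columns of a fixed relation matrix, and then obtain a totally nonsingular relation matrix as a generic linear combination of its rows.

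\textbf{Step 1: fix a relation matrix.} Put $r=N-n$. As in the proof of Proposition~\ref{existanceprop3.3}, the relation space $\mathcal R=\{c\in\mathbb C^N:\sum_j c_jf_j=0\}$ has dimension $r$. We fix a matrix $B\in\mathbb C^{r\times N}$ whose rows form a basis of $\mathcal R$.

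\textbf{Step 2: key reformulation.} We will show that $F$ is robust to $m$ erasures if and only if, for every $\Lambda\subset\{1,\dots,N\}$ with $|\Lambda|=m$, the columns of $B$ indexed by $\Lambda$ are linearly independent. This is the argument of Proposition~\ref{existanceprop3.3} with $r$ replaced by $m$.
\begin{itemize}
\item Suppose $\{f_i\}_{i\notin\Lambda}$ fails to span $\mathcal H_n$. Then there is $x\neq0$ with $\langle x,f_i\rangle=0$ for all $i\notin\Lambda$.
\item The vector $c=\Theta_Fx$ is then nonzero, because $\Theta_F$ is injective, and it is supported on $\Lambda$.
\item The vector $\bar c$ lies in $\mathcal R^{\perp}$ up to conjugation. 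Concretely, $\sum_j a_jf_j=0$ implies $\sum_j \bar a_j\langle x,f_j\rangle=0$. Hence $\bar Bc=0$, which is a nontrivial dependence among the columns of $\bar B$, equivalently of $B$, indexed by $\Lambda$.
\item Conversely, a dependence $B_\Lambda u=0$ with $u\neq0$ gives a nonzero vector supported on $\Lambda$ in $\ker \bar B=\operatorname{Ran}\Theta_F$. This vector is $\Theta_Fx$ for some $x\neq0$ orthogonal to every $f_i$ with $i\notin\Lambda$.
\end{itemize}
Since subsets of independent sets are independent, the condition also holds for every $|\Lambda|\le m$.

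\textbf{(2)$\Rightarrow$(1).} The rows of $A$ lie in $\mathcal R$. Let $|\Lambda|=m$ and suppose $c=\Theta_Fx$ is supported on $\Lambda$. By the conjugation remark, $\bar A c=0$, so $\bar A_\Lambda c_\Lambda=0$. The matrix $\bar A_\Lambda$ is an $m\times m$ submatrix of the totally nonsingular matrix $\bar A$, hence invertible. Therefore $c=0$, so $x=0$, and $\{f_i\}_{i\notin\Lambda}$ spans $\mathcal H_n$.

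\textbf{(1)$\Rightarrow$(2).} We will look for $A=TB$ with $T\in\mathbb C^{m\times r}$; this is possible since $m\le r$. Every such $A$ satisfies $\sum_j a_{ij}f_j=0$. For each pair $(R,\Lambda)$ with $R\subset\{1,\dots,m\}$, $\Lambda\subset\{1,\dots,N\}$ and $|R|=|\Lambda|=p\le m$, the minor $\det\bigl((TB)_{R,\Lambda}\bigr)=\det(T_RB_\Lambda)$ is a polynomial in the entries of $T$.

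The main point is that none of these polynomials is identically zero. By Step 2, $B_\Lambda$ (which is $r\times p$) has full column rank $p$, so it has a left inverse $L\in\mathbb C^{p\times r}$. We choose $T$ whose rows indexed by $R$ are the rows of $L$ and whose other rows are arbitrary. Then $T_RB_\Lambda=I_p$, and the minor equals $1$ at this $T$.

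There are finitely many pairs $(R,\Lambda)$. By Lemma~\ref{lem3point5}, applied on the affine space $\mathbb C^{m\times r}$ (or $\mathbb R^{m\times r}$ in the real case, where $B$ and the left inverses may be taken real), some $T$ avoids all the zero sets simultaneously. The resulting $A=TB$ is totally nonsingular.

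I expect the only delicate points to be the bookkeeping of complex conjugation between "$\sum_j a_{ij}f_j=0$" and "$A\Theta_Fx=0$", and making sure that the left-inverse choice of $T$ works for every minor size $p\le m$, not just for $p=m$.
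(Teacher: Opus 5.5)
Your proof is correct. Its core mechanism is the same as the paper's: each minor is a polynomial that is nonzero at a well-chosen point, so one generic choice avoids all the zero sets at once. The setup differs, though, in a way worth noting.

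\textbf{The paper's route.} For $(1)\Rightarrow(2)$, the paper takes as given an $m\times N$ relation matrix $A_0$ all of whose $m\times m$ column minors are nonzero. It then multiplies by a generic $B\in GL(m,\mathbb F)$, choosing the rows $B_I$ as functionals that restrict to a dual basis on the column space of $(A_0)_J$. For $(2)\Rightarrow(1)$ it cites a ``characterization of $m$-erasure robustness in terms of full-spark relation matrices.'' Proposition~\ref{existanceprop3.3}, however, only proves this when $m=N-n$ and the rows form a basis of the relation space.

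\textbf{Your route.} You keep the full $r\times N$ basis matrix $B$, so that $\ker\bar B=\operatorname{Ran}\Theta_F$ still holds. You prove the $m$-erasure version of the characterization directly in Step 2. You then compress from $r$ rows to $m$ rows and enforce total nonsingularity in a single generic step $A=TB$ with $T\in\mathbb C^{m\times r}$. Your left inverse of $B_\Lambda$ plays the role of the paper's dual-basis functionals. Your $(2)\Rightarrow(1)$ is a direct argument that needs only two facts: the rows of $A$ are relations, and the $m\times m$ minors are nonzero.

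\textbf{Comparison.} Your approach gives a self-contained proof for every $1\le m\le N-n$, with no unproved intermediate existence claim. The paper's extra factor $\det(B)$ keeps the row space of $A$ equal to that of $A_0$. That is harmless but not needed for the statement: in your version $\operatorname{rank}A=m$ follows automatically from any nonzero $m\times m$ minor.
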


\begin{proof}
$(2)\Rightarrow(1)$.
Since $A$ is totally nonsingular, in particular every
$m\times m$ column submatrix of $A$ is invertible.
Hence $A$ is full spark. By the characterization of
$m$-erasure robustness in terms of full-spark relation matrices,
it follows that $F$ is robust to $m$ erasures.

\medskip

$(1)\Rightarrow(2)$.
Assume that $F$ is robust to $m$ erasures.
Then there exists an
$m\times N$ matrix $A_0=(a_{ij}^{(0)})$
such that $\sum_{j=1}^{N}a_{ij}^{(0)}f_j=0,
\quad i=1,\ldots,m,$ and every $m\times m$ column submatrix of $A_0$ is invertible.

In particular, every collection of at most $m$ columns of $A_0$
is linearly independent. Indeed, if some $k<m$ columns were
linearly dependent, then after adjoining any $m-k$ additional
columns, the resulting $m$ columns would also be linearly
dependent, contradicting the full-spark property.

We now show that there exists an invertible matrix
$B\in GL(m,\mathbb F)$, where $\mathbb F=\mathbb R$ or
$\mathbb C$, such that
\[
A:=BA_0
\]
is totally nonsingular. Fix subsets $I\subseteq\{1,\ldots,m\},\quad
J\subseteq\{1,\ldots,N\},$ with $|I|=|J|=k,\quad 1\leq k\leq m.$ Consider the polynomial
\[
p_{I,J}(B)
=
\det\big((BA_0)_{I,J}\big),
\qquad B\in M_m(\mathbb F).
\]
We claim that $p_{I,J}$ is not the zero polynomial.

Since the columns of $(A_0)_J$ are linearly independent, we have $\operatorname{rank}(A_0)_J=k.$ Therefore there exist $k$ linear functionals on $\mathbb F^m$
whose restrictions to the column space of $(A_0)_J$ form a
basis of its dual space. These functionals may be chosen as the
rows of $B$ indexed by $I$ and then extended to a basis of
$(\mathbb F^m)^*$, yielding an invertible matrix $B$ for which
\[
\det\big((BA_0)_{I,J}\big)\neq0.
\]
Thus $p_{I,J}$ is a nonzero polynomial.

There are only finitely many pairs $(I,J)$. Hence the polynomial
\[
P(B)
=
\det(B)
\prod_{k=1}^{m}
\prod_{\substack{|I|=k\\ |J|=k}}
p_{I,J}(B)
\]
is a nonzero polynomial on $M_m(\mathbb F)$.
Since $\mathbb F=\mathbb R$ or $\mathbb C$ is infinite,
there exists $B$ such that $P(B)\neq 0.$ For this choice of $B$, we have $B\in GL(m,\mathbb F)$ and every square minor of $BA_0$ is nonzero. Set $A=BA_0.$ Since the rows of $A$ are linear combinations of the rows of
$A_0$, we still have $\sum_{j=1}^{N}a_{ij}f_j=0,
\quad i=1,\ldots,m.$
Moreover, $A$ is totally nonsingular. This proves $(2)$.
\end{proof}

Proposition~\ref{existanceprop3.3} guarantees the existence of a full-spark matrix
whose rows encode the linear dependencies among the frame vectors.
For certain applications, it is useful to strengthen this property
and construct a relation matrix for which every square minor is
nonzero. The following deterministic procedure uses a
Vandermonde-type change of basis in the relation space to obtain such a totally nonsingular matrix.

\medskip
\noindent
\textbf{Algorithm 3.4 (Construction of a totally nonsingular
$A$-balancing matrix).}
Let $F=\{f_j\}_{j=1}^N$ be an $m$-erasure robust frame for
$\mathcal H_n$.

\begin{enumerate}
    \item By Proposition~\ref{existanceprop3.3}, construct an
    $m\times N$ full-spark matrix
    \[
    B=(b_{\ell j})\in\mathbb F^{m\times N}
    \]
    whose rows are linear relations among the vectors of $F$. Thus
    \[
    BF=0,
    \]
    or, equivalently,
    $\sum_{j=1}^N b_{\ell j}f_j=0$ for
    $\ell=1,\ldots,m$.

    \item For each $j=1,\ldots,N$, associate with the $j$th column
    of $B$ the polynomial
    \[
    p_j(t)=b_{1j}+b_{2j}t+\cdots+b_{mj}t^{m-1}.
    \]
    Since $B$ is full spark, every collection of at most $m$
    columns of $B$ is linearly independent. Consequently, for every
    $J=\{j_1,\ldots,j_s\}\subseteq\{1,\ldots,N\}$ with
    $1\leq s\leq m$, the polynomials
    $p_{j_1},\ldots,p_{j_s}$ are linearly independent.

    \item Introduce parameters $t_1,\ldots,t_m$ and form the
    Vandermonde matrix
    \[
    V(t_1,\ldots,t_m)
    =
    \begin{pmatrix}
    1&t_1&t_1^2&\cdots&t_1^{m-1}\\
    1&t_2&t_2^2&\cdots&t_2^{m-1}\\
    \vdots&\vdots&\vdots&&\vdots\\
    1&t_m&t_m^2&\cdots&t_m^{m-1}
    \end{pmatrix}.
    \]
    Define
    \[
    A(t_1,\ldots,t_m)=V(t_1,\ldots,t_m)B.
    \]
    Its $(i,j)$th entry is $p_j(t_i)$.

    \item For $1\leq s\leq m$, let
    $I=\{i_1,\ldots,i_s\}\subseteq\{1,\ldots,m\}$ and
    $J=\{j_1,\ldots,j_s\}\subseteq\{1,\ldots,N\}$. The corresponding
    $s\times s$ minor of $A(t_1,\ldots,t_m)$ is
    \[
    q_{I,J}(t_1,\ldots,t_m)
    :=
    \det\big(p_{j_q}(t_{i_p})\big)_{p,q=1}^s.
    \]
    Since $p_{j_1},\ldots,p_{j_s}$ are linearly independent,
    $q_{I,J}$ is not the zero polynomial.

    \item Define the polynomial
    \[
    Q(t_1,\ldots,t_m)
    :=
    \prod_{s=1}^m
    \prod_{\substack{I\subseteq\{1,\ldots,m\},\,|I|=s\\
                     J\subseteq\{1,\ldots,N\},\,|J|=s}}
    q_{I,J}(t_1,\ldots,t_m).
    \]
    Each factor is a nonzero polynomial, and hence
    $Q$ is a nonzero polynomial. Therefore, over the infinite field
    $\mathbb F=\mathbb R$ or $\mathbb C$, there exists
    $(t_1,\ldots,t_m)\in\mathbb F^m$ such that
    \[
    Q(t_1,\ldots,t_m)\neq0.
    \]
    Choose any such tuple. Since the $m\times m$ minor corresponding
    to any $m$ columns is nonzero, the parameters $t_1,\ldots,t_m$
    are necessarily distinct; equivalently, one may require them to
    be distinct when making the choice.

    \item Set
    \[
    A=A(t_1,\ldots,t_m)
      =V(t_1,\ldots,t_m)B.
    \]
    Since $Q(t_1,\ldots,t_m)\neq0$, every square minor of $A$ is
    nonzero. Hence $A$ is totally nonsingular. Moreover,
    \[
    AF=V(t_1,\ldots,t_m)BF=0.
    \]
    Thus $F$ is $A$-balanced.
\end{enumerate}
We illustrate Algorithm~3.4 with a full-spark matrix which is
not totally nonsingular, and show explicitly how the algorithm
produces a totally nonsingular matrix.
\begin{example}
Consider the frame $F=\{f_1,f_2,f_3,f_4\}\subset \mathbb R^2,$
where
\[
f_1=
\begin{pmatrix}
-1\\-1
\end{pmatrix},
\qquad
f_2=
\begin{pmatrix}
-1\\-2
\end{pmatrix},
\qquad
f_3=
\begin{pmatrix}
1\\0
\end{pmatrix},
\qquad
f_4=
\begin{pmatrix}
0\\1
\end{pmatrix}.
\]
Every pair of vectors in $F$ is linearly independent. Hence $F$
is full spark and it is $2$-erasure robust.
Consider the matrix
\[
B=
\begin{pmatrix}
1&0&1&1\\
0&1&1&2
\end{pmatrix}.
\]
$B$ is not totally nonsingular. The two rows of $B$ give the relations
\[
f_1+f_3+f_4=0, \quad \text{and} \quad f_2+f_3+2f_4=0.
\]

Consequently, $BF=0.$ Following Algorithm~3.4, the polynomials associated with the
columns of $B$ are
\[
p_1(t)=1,\qquad
p_2(t)=t,\qquad
p_3(t)=1+t,\qquad
p_4(t)=1+2t.
\]
Choose $t_1=1,\quad t_2=3.$ Then $V=
\begin{pmatrix}
1&1\\
1&3
\end{pmatrix},$
and hence
\[
A=VB
=
\begin{pmatrix}
1&1\\
1&3
\end{pmatrix}
\begin{pmatrix}
1&0&1&1\\
0&1&1&2
\end{pmatrix}
=
\begin{pmatrix}
1&1&2&3\\
1&3&4&7
\end{pmatrix}.
\]

It is easy to verify that $A$ is totally nonsingular.  Finally,
\[
AF=VBF=0.
\]
Therefore $F$ is $A$-balanced, where $A=
\begin{pmatrix}
1&1&2&3\\
1&3&4&7
\end{pmatrix}$
is totally nonsingular, even though the original balancing matrix $B=
\begin{pmatrix}
1&0&1&1\\
0&1&1&2
\end{pmatrix}$
is not totally nonsingular.
\end{example}

\begin{example}
Consider the frame
\[
F=\left\{
f_1=\begin{pmatrix}1\\0\end{pmatrix},
f_2=\begin{pmatrix}0\\1\end{pmatrix},
f_3=\begin{pmatrix}1\\1\end{pmatrix}
\right\}\subset\mathbb R^2
\]
and let
\[
A=\begin{pmatrix}1&1&1\end{pmatrix}.
\]
Then
\[
A[f_1,f_2,f_3]^T=f_1+f_2+f_3
=\begin{pmatrix}2\\2\end{pmatrix}\neq0,
\]
so $F$ does not have vanishing $A$-moment.

The canonical dual of $F$ is
\[
\left\{
\frac13\begin{pmatrix}2\\-1\end{pmatrix},
\frac13\begin{pmatrix}-1\\2\end{pmatrix},
\frac13\begin{pmatrix}1\\1\end{pmatrix}
\right\},
\]
whose elements do not sum to zero. Hence the canonical dual does
not have vanishing $A$-moment either.

On the other hand, every dual frame of $F$ is of the form
\[
G_h=
\left\{
\begin{pmatrix}1\\0\end{pmatrix}-h,\,
\begin{pmatrix}0\\1\end{pmatrix}-h,\,
h
\right\},
\qquad h\in\mathbb R^2.
\]
Taking $h=\begin{pmatrix}1\\1\end{pmatrix},$
we obtain the dual frame
\[
G=
\left\{
\begin{pmatrix}0\\-1\end{pmatrix},
\begin{pmatrix}-1\\0\end{pmatrix},
\begin{pmatrix}1\\1\end{pmatrix}
\right\}.
\]
Clearly, $g_1+g_2+g_3=0,$ and hence $A[g_1,g_2,g_3]^T=0.$

Thus $F$ admits a dual frame with vanishing $A$-moment even though
neither $F$ nor its canonical dual has vanishing $A$-moment.
\end{example}

\section{ Frame vanishing moments and connection with graphs}
An important special case of vanishing $A$-moments arises when the matrix $A$ is chosen to be a Vandermonde-type matrix. This leads to the notion of $k$-vanishing moments, which plays a central role in wavelet theory, approximation theory, and signal processing. In the finite-dimensional frame setting, $k$-vanishing moments impose a collection of polynomial cancellation conditions on the frame vectors and provide a natural extension of the concept of balanced frames.

Let $F=\{f_i\}_{i=1}^{N}$ be a frame for an $n$-dimensional Hilbert space $\mathcal H_n$. For each integer $r\geq 0$, define $v_r=(1^r,2^r,\ldots,N^r)^t\in\mathbb C^N.$ We say that $F$ has \emph{$k$-vanishing moments} if $\sum_{i=1}^{N} i^r f_i =0, \quad r=0,1,\ldots,k-1.$ Equivalently,
$\Theta_F^{*}v_r=0, \quad r=0,1,\ldots,k-1.$
Observe that the case $k=1$ reduces to the balanced frame condition $\sum_{i=1}^{N}f_i=0.$ For $0\le r\le k-1$, let $u_r=\sum_{i=1}^{N}i^r f_i,
\quad v_r=(1^r,2^r,\ldots,N^r)^t,$ and define $P_k=\operatorname{span}\{v_0,v_1,\ldots,v_{k-1}\}.$
Since $v_0,v_1,\ldots,v_{k-1}$ are columns of a Vandermonde matrix, they are linearly independent whenever $N\ge k$, and hence $\dim P_k=k.$\\
\par The following proposition characterizes the existence of dual frames possessing $k$-vanishing moments in terms of the geometry of the analysis operator of the original frame.

\begin{proposition}
Let $F=\{f_i\}_{i=1}^N$ be a frame for $\mathcal H_n$ and let
$1\leq k\leq N$. Then $F$ admits a dual frame with
$k$-vanishing moments if and only if $P_k\cap\operatorname{Range}(\Theta_F)=\{0\}.$

In this case, necessarily $k\leq N-n$.
\end{proposition}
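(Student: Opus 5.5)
The plan is to obtain the proposition as the special case of Theorem~\ref{equivalancethm3point1} in which $A$ is the $k\times N$ Vandermonde-type matrix whose rows are $v_0^t,v_1^t,\ldots,v_{k-1}^t$. With $A_k:=[v_0,\ldots,v_{k-1}]^t\in\mathbb C^{k\times N}$, the equations $\sum_{i=1}^N i^r g_i=0$ for $r=0,\ldots,k-1$ say exactly that $\sum_j (A_k)_{rj}\,g_j=0$ for every row $r$. This is the same as $A_k[\langle f,g_1\rangle,\ldots,\langle f,g_N\rangle]^t=0$ for all $f\in\mathcal H_n$. Hence a dual frame $G$ has $k$-vanishing moments if and only if it has vanishing $A_k$-moment.

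The first step is to check the hypothesis of Theorem~\ref{equivalancethm3point1} that $A_k$ has full row rank. Since $k\le N$, the columns of $A_k$ indexed by $1,\ldots,k$ form a $k\times k$ Vandermonde matrix on the distinct nodes $1,\ldots,k$. Its determinant is nonzero, so $\operatorname{rank}A_k=k$.

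The second step is to note that $\operatorname{Row}(A_k)=\operatorname{span}\{v_0,\ldots,v_{k-1}\}=P_k$. Here I identify rows with vectors in $\mathbb C^N$ in the same way as in Theorem~\ref{equivalancethm3point1}; the $v_r$ are real, so conjugation causes no issue. Now the equivalence (2)$\Leftrightarrow$(3) of Theorem~\ref{equivalancethm3point1} gives directly that $F$ has a dual with $k$-vanishing moments if and only if $P_k\cap\operatorname{Range}(\Theta_F)=\{0\}$.

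For the final assertion, I will use a dimension count. $\Theta_F$ is injective because $F$ is a frame, so $\dim\operatorname{Range}(\Theta_F)=n$, and $\dim P_k=k$ as noted before the proposition. If the two subspaces meet trivially, their sum is direct inside $\mathbb C^N$, so $k+n\le N$, that is, $k\le N-n$. I do not expect a real obstacle here. The only points needing care are verifying full row rank, which is where $k\le N$ enters, and checking that the identification of $\operatorname{Row}(A_k)$ with $P_k$ matches the convention of Theorem~\ref{equivalancethm3point1}.
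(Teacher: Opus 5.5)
Your proof is correct. Mathematically it is the same argument as the paper's, but you organize it as a corollary of Theorem~\ref{equivalancethm3point1}, whereas the paper proves the proposition from scratch.

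\textbf{Your route.} You take $A_k=[v_0,\ldots,v_{k-1}]^t$ and check the hypotheses of Theorem~\ref{equivalancethm3point1}:
\begin{enumerate}
\item full row rank, via the $k\times k$ Vandermonde minor on the nodes $1,\ldots,k$;
\item $\operatorname{Row}(A_k)=P_k$;
\item the realness of $A_k$, which reconciles the conjugation hidden in the definition of vanishing $A$-moment with the form $\sum_j a_{ij}g_j=0$ used in that theorem's proof.
\end{enumerate}
The equivalence then follows from $(2)\Leftrightarrow(3)$. Full row rank is not actually used in the proof of $(2)\Leftrightarrow(3)$, only in $(3)\Leftrightarrow(4)$, so that check is harmless but not load-bearing.

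\textbf{The paper's route.} The paper re-runs the $(2)\Leftrightarrow(3)$ argument with $P_k$ in place of $\operatorname{Row}(A)$. In one direction, $P_k\subseteq\ker\Theta_G^*$, and $\ker\Theta_G^*\cap\operatorname{Range}(\Theta_F)=\{0\}$ because $\Theta_G^*\Theta_F=I$. In the other direction, it chooses a complement $Y\supseteq P_k$ of $\operatorname{Range}(\Theta_F)$ and the left inverse $T$ of $\Theta_F$ with $T|_Y=0$.

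\textbf{The bound $k\le N-n$.} The paper goes through the dual: $P_k\subseteq\ker\Theta_G^*$ and $\dim\ker\Theta_G^*=N-n$. Your count $n+k=\dim\bigl(\operatorname{Range}(\Theta_F)\oplus P_k\bigr)\le N$ reads the bound directly off the intersection condition, without needing a dual.

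\textbf{Trade-off.} Your version is shorter and shows the proposition is a genuine special case of the general $A$-moment theory. The paper's version is self-contained and does not depend on the conventions or hypotheses of the earlier theorem.
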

\begin{proof}
Suppose first that $G=\{g_i\}_{i=1}^N$ is a dual frame of $F$ having 
$k$-vanishing moments. That is, $\sum_{i=1}^N i^r g_i=0,
\quad r=0,1,\dots,k-1.$ Equivalently, $\Theta_{G}^* v_r=0, \quad r=0,1,\dots,k-1.$ Hence $P_k\subseteq \ker \Theta_{G}^* = (\operatorname{Range}(\Theta_G))^\perp .$  Since $G$ is a dual frame of $F$, we have $\Theta_{G}^* \Theta_F=I_{\mathcal H_n}.$
It follows that $\Theta_{G}^*$ is injective on $\operatorname{Range}(\Theta_F)$. Indeed, if
$y\in \operatorname{Range}(\Theta_F)$ and $\Theta_{G}^*y=0$, then there exists 
$x\in\mathcal H_n$ such that $y=\Theta_Fx$. Thus $0=\Theta_{G}^*y=\Theta_{G}^* \Theta_Fx=x,$
and hence $y=\Theta_Fx=0$. Therefore,
\[
\ker \Theta_{G}^*\cap \operatorname{Range}(\Theta_F)=\{0\}.
\]
Since $P_k\subseteq \ker \Theta_{G}^*$, we obtain $P_k\cap \operatorname{Range}(\Theta_F)=\{0\}.$ Moreover, since $\Theta_{G}^* \Theta_F=I_{\mathcal H_n}$, we have $\operatorname{rank}(\Theta_{G})=n.$ Thus
$\dim \ker \Theta_{G}^*=N-n.$ Because $P_k\subseteq \ker \Theta_{G}^*$, it follows that
\[
\dim P_k\leq \dim \ker \Theta_{G}^*=N-n.
\]
Since the vectors $v_0,v_1,\dots,v_{k-1}$ are linearly independent, 
$\dim P_k=k.$ Therefore, $k\leq N-n.$

Conversely, assume that $P_k\cap \operatorname{Range}(\Theta_F)=\{0\}.$ Since $\dim P_k=k$ and $\dim \operatorname{Range}(\Theta_F)=n$, we can choose a 
subspace $Y\subseteq \mathbb C^N$ such that $\mathbb C^N=\operatorname{Range}(\Theta_F)\oplus Y$ and $P_k\subseteq Y.$
Define a linear operator $T:\mathbb C^N\to \mathcal H_n$ by
\[
T(\Theta_Ff)=f,
\quad f\in\mathcal H_n, \qquad \text{and} \; T|_Y=0.
\]

Then $T \Theta_F=I_{\mathcal H_n}.$ Hence $T$ is the synthesis operator of some dual frame  $G=\{g_i\}_{i=1}^N$ of $F$. Since $P_k\subseteq Y\subseteq \ker T$, we have $Tv_r=0, \quad r=0,1,\dots,k-1.$ Equivalently, $\sum_{i=1}^N i^r g_i=0,
\quad r=0,1,\dots,k-1.$ Therefore, $G$ has $k$-vanishing moments. This completes the proof.
\end{proof}

\begin{remark}
    $F$ has $k$-vanishing moments iff $P_k\subseteq \ker \Theta_F^*$.
\end{remark}

The notion of $k$-vanishing moments can be viewed from different perspectives. Besides the moment conditions themselves, they admit equivalent descriptions through polynomial cancellation properties and factorization of the analysis operator. The following propositions provide these alternative characterizations.
\begin{proposition}
Let $F=\{f_i\}_{i=1}^N$ be a frame for $\mathcal H_n$. Then $F$ has $k$-vanishing moments if and only if For every polynomial $p$ with $\deg(p)\leq k-1$,\; $\sum_{i=1}^N p(i)f_i=0.$
\end{proposition}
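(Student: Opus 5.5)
The argument is a direct linearity check in both directions. The key observation is that the space of polynomials of degree at most $k-1$ has the monomial basis $\{1,t,\ldots,t^{k-1}\}$. Moreover, the map $p\mapsto\sum_{i=1}^N p(i)f_i$ is linear in $p$. In the notation introduced before the previous proposition, this says $\sum_{i=1}^N p(i)f_i=\Theta_F^*\bigl((p(1),\ldots,p(N))^t\bigr)$.

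For the forward direction, assume $F$ has $k$-vanishing moments, so $u_r=\sum_{i=1}^N i^r f_i=0$ for $r=0,\ldots,k-1$. Take an arbitrary polynomial $p$ with $\deg(p)\leq k-1$ and write it as $p(t)=\sum_{r=0}^{k-1}\alpha_r t^r$. Interchanging the finite sums gives
\[
\sum_{i=1}^N p(i)f_i=\sum_{r=0}^{k-1}\alpha_r\sum_{i=1}^N i^r f_i=\sum_{r=0}^{k-1}\alpha_r u_r=0 .
\]
Equivalently, $(p(1),\ldots,p(N))^t=\sum_r\alpha_r v_r\in P_k\subseteq\ker\Theta_F^*$, which is exactly the remark following the previous proposition.

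For the converse, assume $\sum_{i=1}^N p(i)f_i=0$ for every polynomial of degree at most $k-1$. Specializing to the monomials $p(t)=t^r$, $r=0,1,\ldots,k-1$, recovers the defining identities $\sum_{i=1}^N i^r f_i=0$. For $r=0$ we use the convention $0^0$-free evaluation $p\equiv 1$, so $p(i)=1$ and the identity is $\sum_i f_i=0$. Hence $F$ has $k$-vanishing moments.

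There is no real obstacle in this argument. The only point needing care is the identification between the polynomial condition and the subspace $P_k$: the evaluation vectors $(p(1),\ldots,p(N))^t$ of polynomials of degree at most $k-1$ span exactly $P_k$. This lets the statement be phrased as $P_k\subseteq\ker\Theta_F^*$, consistent with the preceding remark.
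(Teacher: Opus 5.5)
Your proposal is correct and matches the paper's proof: for the forward direction you expand $p$ in the monomial basis and interchange the finite sums, and for the converse you specialize to $p(t)=t^r$, $r=0,\ldots,k-1$. Your aside about $0^0$ is unnecessary, since the evaluation points are $i=1,\ldots,N$ and never zero.
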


\begin{proof}
Suppose first that $F$ has $k$-vanishing moments. Then, by definition, $\sum_{i=1}^N i^r f_i=0, \quad r=0,1,\ldots,k-1.$ Let $p$ be any polynomial with $\deg(p)\leq k-1$. Then $p(t)=\sum_{r=0}^{k-1} a_r t^r, $ for some scalars $a_0,a_1,\ldots,a_{k-1}$. Therefore,
\[
\sum_{i=1}^N p(i)f_i
=
\sum_{i=1}^N \left(\sum_{r=0}^{k-1} a_r i^r\right)f_i
=
\sum_{r=0}^{k-1} a_r \left(\sum_{i=1}^N i^r f_i\right)
=0.
\]

Conversely, suppose that for every polynomial $p$ with $\deg(p)\leq k-1,\; \sum_{i=1}^N p(i)f_i=0.$ Taking $p(t)=t^r$ for $r=0,1,\ldots,k-1$, we obtain $\sum_{i=1}^N i^r f_i=0,
\qquad r=0,1,\ldots,k-1.$ Hence $F$ has $k$-vanishing moments.
\end{proof}

\begin{proposition}
Let $F=\{f_i\}_{i=1}^N$ be a frame for  $\mathcal H_n$ and $V_k=[v_0,v_1,\ldots,v_{k-1}]\in \mathbb F^{N\times k}.$ Then the following statements are equivalent:
\begin{enumerate}
    \item $F$ has $k$-vanishing moments.
    \item There exists a matrix $B\in \mathbb F^{N\times (N-k)}$ with
    $\operatorname{rank}(B)=N-k$ such that $V_k^*B=0$   and there exists a frame $H=\{h_j\}_{j=1}^{N-k}$ for $\mathcal H_d$ such that $\Theta_F^*=T_H^*B^*.$
    Equivalently,
    \[
    f_i=\sum_{j=1}^{N-k}\overline{b_{ij}}\,h_j,
    \qquad i=1,2,\ldots,N.
    \]
\end{enumerate}
\end{proposition}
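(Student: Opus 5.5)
The plan is to read both directions through the factorization of the synthesis operator. Recall that $\Theta_F^*:\mathbb F^N\to\mathcal H_n$ sends $e_i\mapsto f_i$. Condition (1) says exactly that $P_k=\operatorname{span}\{v_0,\dots,v_{k-1}\}=\operatorname{Ran}(V_k)$ is contained in $\ker\Theta_F^*$, as in the Remark above. Condition (2) asks that $\Theta_F^*$ factor as $T_H^*B^*$, where $T_H^*:\mathbb F^{N-k}\to\mathcal H_n$ is the synthesis operator of $H$. Since $\Theta_F^*$ is the synthesis operator of a frame, it is surjective, so in (2) the frame property of $H$ is automatic: $\operatorname{Ran}T_H^*\supseteq\operatorname{Ran}\Theta_F^*=\mathcal H_n$. (I read $\mathcal H_d$ in the statement as $\mathcal H_n$.)

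For (2) $\Rightarrow$ (1), I would use $V_k^*B=0$, which gives $B^*V_k=(V_k^*B)^*=0$. Then
\[
\Theta_F^*v_r=T_H^*B^*v_r=0,\qquad r=0,\ldots,k-1,
\]
which is precisely $\sum_i i^rf_i=0$. Writing out $\Theta_F^*e_i=T_H^*B^*e_i=\sum_j\overline{b_{ij}}h_j$ also gives the displayed coordinate formula.

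For (1) $\Rightarrow$ (2), I would first choose $B$. The space $P_k^\perp=\ker V_k^*$ has dimension $N-k$, because $\dim P_k=k$ by the Vandermonde argument. Let $B$ be the $N\times(N-k)$ matrix whose columns form a basis of $P_k^\perp$; orthonormal columns are convenient. Then $\operatorname{rank}B=N-k$ and $V_k^*B=0$. Moreover $\ker B^*=(\operatorname{Ran}B)^\perp=P_k$. Next I would apply the factorization lemma stated before Theorem~\ref{robustthm3point5}, taking $S=B^*$ and $T=\Theta_F^*$. Since $\ker B^*=P_k\subseteq\ker\Theta_F^*$ by (1), there is a linear $M:\mathbb F^{N-k}\to\mathcal H_n$ with $\Theta_F^*=MB^*$. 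Let $h_j=Me_j$, so that $M=T_H^*$. With orthonormal columns one can even write $M=\Theta_F^*B$ explicitly, because $BB^*$ is the projection onto $P_k^\perp$ and $\Theta_F^*$ vanishes on $P_k$. Finally, $H$ is a frame for $\mathcal H_n$ by the surjectivity remark above.

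None of these steps is genuinely hard. The only points needing care are the identification $\ker B^*=P_k$, which uses the rank condition together with $V_k^*B=0$, and keeping the conjugates straight in the coordinate formula $f_i=\sum_j\overline{b_{ij}}h_j$. I also need to check that the statement's $T_H^*$ denotes the synthesis operator of $H$, consistent with $\Theta_H^*$ elsewhere in the paper.
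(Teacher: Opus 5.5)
Your proposal is correct and follows essentially the same route as the paper. For the forward direction, the paper also takes $B$ with columns forming a basis of $\operatorname{Ran}(V_k)^\perp$ and sets $T_H^*=\Theta_F^*(B^*)^\dagger$ for a right inverse of $B^*$; your $M=\Theta_F^*B$ with orthonormal columns is the special case $(B^*)^\dagger=B$, and your use of the factorization lemma via $\ker B^*=P_k\subseteq\ker\Theta_F^*$ is the abstract form of the same step. The converse is the same computation $\Theta_F^*v_r=T_H^*B^*v_r=0$ in both.
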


\begin{proof}
Assume that $F$ has $k$-vanishing moments. Then $\Theta_F^*v_r=0,
\qquad r=0,1,\ldots,k-1.$ Hence $\operatorname{Ran}(V_k)\subseteq \ker \Theta_F^*. $ Choose a matrix $B\in \mathbb F^{N\times (N-k)}$ whose columns form a basis for $\operatorname{Ran}(V_k)^\perp$. Then $V_k^*B=0$ and $B$ has full column rank $N-k$. Since $\mathbb F^N=\operatorname{Ran}(V_k)\oplus \operatorname{Ran}(B),$ and  $\operatorname{Ran}(V_k)\subseteq \ker \Theta_F^*$, the operator $\Theta_F^*$ is completely determined by its values on $\operatorname{Ran}(B)$. Define
\[
T_H^*:=\Theta_F^*(B^*)^\dagger,
\]
where $(B^*)^\dagger$ denotes a right inverse of $B^*$. Then $T_H^*B^*=\Theta_F^*.$\\
Let $H=\{h_j\}_{j=1}^{N-k}$ be the sequence determined by the synthesis operator $T_H^*$,
that is, $T_H^*e_j=h_j,\quad j=1,2,\ldots,N-k.$ Since $F$ is a frame for $\mathcal H_d$, we have $\operatorname{rank}(\Theta_F^*)=d.$
Also $B$ has full column rank and $\Theta_F^*=T_H^*B^*$, so $T_H^*$ has rank $d$. Therefore $H$ spans $\mathcal H_d$, and hence $H$ is a frame for $\mathcal H_d$. Finally, writing $B=(b_{ij})$, the identity $\Theta_F^*=T_H^*B^*$ gives
\[
f_i=\Theta_F^*e_i
=
T_H^*B^*e_i
=
T_H^*
\begin{pmatrix}
\overline{b_{i1}}\\
\overline{b_{i2}}\\
\vdots\\
\overline{b_{i,N-k}}
\end{pmatrix}
=
\sum_{j=1}^{N-k}\overline{b_{ij}}\,h_j.
\].

Conversely, for $r=0,1,\ldots,k-1$, we have $\Theta_F^*v_r=T_H^*B^*v_r.$ Since $V_k^*B=0$, it follows that $B^*V_k=0.$ Hence $B^*v_r=0,\qquad r=0,1,\ldots,k-1.$ Therefore, $\Theta_F^*v_r=T_H^*B^*v_r=0,
\quad r=0,1,\ldots,k-1.$ Equivalently, $\sum_{i=1}^N i^r f_i=0,
\quad r=0,1,\ldots,k-1.$ Thus $F$ has $k$-vanishing moments.

\end{proof}

The previous results characterize $k$-vanishing moments in terms of polynomial cancellations and operator factorizations. We now present a characterization for the existence of dual frames with $k$-vanishing moments using the ranges of suitable analysis operators associated with the moment vectors. Recall that for $0\leq j\leq k-1$, define $u_j=\sum_{i=1}^N i^j f_i$ and $v_j=(1^j,2^j,\dots,N^j)^t.$ Let  $U=\{u_j\}_{j=0}^{k-1},$ $V=\{v_j\}_{j=0}^{k-1}.$

\begin{proposition}
Let $F=\{f_i\}_{i=1}^N$ be a frame for an $n$-dimensional Hilbert space
$\mathcal H_n$.  If $F$ has a dual frame with $k$-vanishing moments then
$\operatorname{Range}(\Theta_U)\subseteq \operatorname{Range}(\Theta_V).$
\end{proposition}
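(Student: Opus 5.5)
The plan is to prove a factorization $\Theta_U=\Theta_V\Theta_F$. Here $\Theta_U:\mathcal H_n\to\mathbb C^k$ is the analysis operator of $U=\{u_j\}_{j=0}^{k-1}$, and $\Theta_V:\mathbb C^N\to\mathbb C^k$ is the analysis operator of $V=\{v_j\}_{j=0}^{k-1}$, viewed as a finite sequence in $\mathbb C^N$. Once the factorization holds, the inclusion $\operatorname{Range}(\Theta_U)\subseteq\operatorname{Range}(\Theta_V)$ is immediate.

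\textbf{Step 1: the factorization.} Since $u_j=\sum_{i=1}^N i^j f_i=\Theta_F^*v_j$, for every $x\in\mathcal H_n$ and every $0\le j\le k-1$ we have
\[
\langle x,u_j\rangle=\langle x,\Theta_F^*v_j\rangle=\langle \Theta_Fx,v_j\rangle .
\]
Comparing coordinates gives $\Theta_Ux=\Theta_V(\Theta_Fx)$ for all $x$. Hence $\Theta_U=\Theta_V\Theta_F$, and therefore
\[
\operatorname{Range}(\Theta_U)=\Theta_V\bigl(\operatorname{Range}(\Theta_F)\bigr)\subseteq\operatorname{Range}(\Theta_V).
\]

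\textbf{Step 2: where the hypothesis enters.} The computation in Step 1 does not actually use the dual frame. The hypothesis can still be exploited to give a sharper statement. By the preceding proposition, the existence of a dual frame with $k$-vanishing moments forces $P_k\cap\operatorname{Range}(\Theta_F)=\{0\}$ and $k\le N-n\le N$. Consequently $v_0,\dots,v_{k-1}$ are linearly independent (Vandermonde columns with distinct nodes $1,\dots,N$). It follows that $\Theta_V$ has rank $k$, i.e. $\operatorname{Range}(\Theta_V)=\mathbb C^k$, so the inclusion holds for this reason as well.

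A dual-frame version of the same identity is also available. If $G$ is the dual, then $\Theta_G^*v_j=0$ and $\Theta_G^*\Theta_F=I$. The decomposition $\mathbb C^N=\operatorname{Range}(\Theta_F)\oplus\ker\Theta_G^*$, together with $P_k\subseteq\ker\Theta_G^*$, is exactly the transversality used in Theorem~\ref{equivalancethm3point1}, (2)$\Leftrightarrow$(3).

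I expect no real obstacle. The only point requiring care is fixing the conventions: which inner product $\Theta_V$ uses on $\mathbb C^N$, and confirming that $\langle x,\Theta_F^*v\rangle=\langle\Theta_Fx,v\rangle$ holds with the paper's definition $\Theta_F^*(c)=\sum c_if_i$. This is routine; conjugations cancel consistently because both sides are written as inner products.
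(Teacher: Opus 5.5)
Your Step 1 is exactly the paper's argument: the paper takes $\alpha=\Theta_U f$, sets $c=\Theta_F f$, and checks $\alpha_j=\sum_i i^j\langle f,f_i\rangle=\langle c,v_j\rangle$, which is your factorization $\Theta_U=\Theta_V\Theta_F$. Like you, the paper never actually uses the dual frame. Your Step 2 remark is also correct: once $k\le N$, the Vandermonde vectors $v_0,\dots,v_{k-1}$ are independent, so $\Theta_V$ maps onto $\mathbb{C}^k$ and the inclusion is essentially automatic.
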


\begin{proof}
Let $G=\{g_i\}_{i=1}^N$ be a dual frame of $F$. Then $T_G^*\Theta_F=I_{\mathcal H_n}.$ The condition that $G$ has $k$-vanishing moments means that $\sum_{i=1}^N i^j g_i=0,
\quad j=0,1,\ldots,k-1.$ Equivalently, $T_G^*v_j=0, \quad j=0,1,\ldots,k-1,$ where
$v_j=(1^j,2^j,\ldots,N^j)^t\in\mathbb C^N.$ Now, by the definition of $u_j$, for every $f\in\mathcal H_n$ we have
\[
\langle f,u_j\rangle
=
\left\langle f,\sum_{i=1}^N i^j f_i\right\rangle
=
\sum_{i=1}^N i^j\langle f,f_i\rangle .
\]
Hence $\Theta_U f
=
\left(
\sum_{i=1}^N i^0\langle f,f_i\rangle,
\sum_{i=1}^N i^1\langle f,f_i\rangle,
\ldots,
\sum_{i=1}^N i^{k-1}\langle f,f_i\rangle
\right).$ Suppose first that $F$ has a dual frame $G$ with $k$-vanishing moments. Then $T_G^*v_j=0,\quad j=0,1,\ldots,k-1.$ Let $\alpha\in \operatorname{Range}(\Theta_U)$. Then there exists $f\in\mathcal H_n$ such that $\alpha=\Theta_U f.$ Thus
$\alpha = \big(\langle f,u_0\rangle,\langle f,u_1\rangle,\ldots, \langle f,u_{k-1}\rangle\big).$ Since $u_j=\sum_{i=1}^N i^j f_i,$ we have $\langle f,u_j\rangle
= \left\langle f,\sum_{i=1}^N i^j f_i\right\rangle = \sum_{i=1}^N i^j \langle f,f_i\rangle .$ Let us take
\[
c=(\langle f,f_1\rangle,\langle f,f_2\rangle,\ldots,
\langle f,f_N\rangle)^t
=\Theta_F f\in\mathbb C^N.
\]
Then, for each $j=0,1,\ldots,k-1$, \,$\alpha_j = \sum_{i=1}^N i^j \langle f,f_i\rangle = \langle c,v_j\rangle .$ Therefore, $\alpha = \big(\langle c,v_0\rangle,\langle c,v_1\rangle,\ldots, \langle c,v_{k-1}\rangle\big) = \Theta_V c.$ Hence $\alpha\in \operatorname{Range}(\Theta_V).$ Since $\alpha\in \operatorname{Range}(\Theta_U)$ was arbitrary, we conclude that $\operatorname{Range}(\Theta_U)\subseteq \operatorname{Range}(\Theta_V).$

\end{proof}

\begin{example}
Consider the frame \(
F=\left\{
\begin{pmatrix}2\\1\end{pmatrix},
\begin{pmatrix}1\\2\end{pmatrix},
\begin{pmatrix}1\\3\end{pmatrix},
\begin{pmatrix}1\\5\end{pmatrix}
\right\}
\)
 and a dual \(G=\left\{
\begin{pmatrix}1\\0\end{pmatrix},
\begin{pmatrix}-2\\1\end{pmatrix},
\begin{pmatrix}1\\-2\end{pmatrix},
\begin{pmatrix}0\\1\end{pmatrix}
\right\}\) for $\mathbb R^2$.
It is easy to see that
\[
\sum_{i=1}^{4}g_i=0
\qquad\text{and}\qquad
\sum_{i=1}^{4}i\,g_i=0.
\]
Thus $G$ has two vanishing moments. Equivalently, if
\[
P_2=\operatorname{span}
\left\{
\begin{pmatrix}1\\1\\1\\1\end{pmatrix},
\begin{pmatrix}1\\2\\3\\4\end{pmatrix}
\right\},
\]
then $P_2\subseteq\ker\Theta_G^*.$ Consequently, for every polynomial $p$ of degree at most one, $\sum_{i=1}^{4}p(i)g_i=0.$ On the other hand, $\sum_{i=1}^{4}f_i\neq0,$ so $F$ itself does not even have one vanishing moment. Hence this
example illustrates that a frame need not possess vanishing moments
itself in order to admit a dual frame with higher-order vanishing
moments.
\end{example}

\subsection{Graph Frame}

We begin by recalling the connection between finite frames and
finite simple graphs. Graph-theoretic structures provide a natural
way of constructing and analyzing finite frames through matrices
associated with graphs, particularly the graph Laplacian. This
connection allows geometric and linear-algebraic properties of a
frame to be studied in terms of combinatorial properties of the
underlying graph.

Let $\Gamma$ be a finite simple graph with vertex set $V(\Gamma)=\{v_1,\ldots,v_N\}.$ The adjacency matrix of $\Gamma$ is the matrix $A(\Gamma)=(a_{ij})_{i,j=1}^N$ defined by
\[
a_{ij}=
\begin{cases}
1, & \text{if } v_i \text{ and } v_j \text{ are adjacent},\\
0, & \text{otherwise}.
\end{cases}
\]
Let $D(\Gamma)=\operatorname{diag}(d_1,\ldots,d_N),$
where $d_i$ denotes the degree of the vertex $v_i$. The
\emph{Laplacian matrix} of $\Gamma$ is defined by $L(\Gamma)=D(\Gamma)-A(\Gamma).$ It is well known that $L(\Gamma)$ is positive semidefinite and
that the multiplicity of the eigenvalue $0$ is equal to the number
of connected components of $\Gamma$. Consequently, if $\Gamma$
has $c$ connected components, then $\operatorname{rank} L(\Gamma)=N-c.$
On the frame-theoretic side, let
$F=\{f_i\}_{i=1}^N$ be a frame for a finite-dimensional Hilbert
space $\mathcal H$. Its Gramian is the $N\times N$ matrix $G_F=\big(\langle f_j,f_i\rangle\big)_{i,j=1}^N
    =\Theta_F\Theta_F^*.$
We say that $F$ is a \emph{graph frame} if there exists a finite
simple graph $\Gamma$ such that $G_F=L(\Gamma).$ In other words, a frame is a graph frame precisely when its
Gramian coincides with the Laplacian matrix of some finite simple
graph. In this case, we say that $F$ is generated by, or associated
with, the graph $\Gamma$. For a more detailed discussion of graph frames and their properties, we refer the reader to \cite{shankha}.

\begin{example}
Consider the balanced frame
\[
F=
\left\{
\begin{pmatrix}1\\0\end{pmatrix},
\begin{pmatrix}0\\1\end{pmatrix},
\begin{pmatrix}-1\\1\end{pmatrix},
\begin{pmatrix}0\\-2\end{pmatrix}
\right\}
\subset \mathbb R^2.
\]

Its Gram matrix is
\[
G_F=
\begin{pmatrix}
1&0&-1&0\\
0&1&1&-2\\
-1&1&2&-2\\
0&-2&-2&4
\end{pmatrix}.
\]

Since $(G_F)_{23}=1>0$, the Gram matrix has a positive off-diagonal entry. However,
every graph Laplacian has non-positive off-diagonal entries. Therefore $G_F$ cannot be the Laplacian matrix of any graph.
Hence $F$ is a balanced frame which is not a graph frame.
\end{example}

It is clear from the preceding discussion that not every frame can
be realized as a graph frame, since the Gramian of a graph frame
must possess the specific structural properties of a graph
Laplacian. This naturally raises the question of which frames admit
such a graph-theoretic realization. The following propositions and
theorems provide conditions under which a frame can be identified
with, or constructed from, the Laplacian of a finite graph, thereby
clarifying the relationship between balanced frames and graph
frames.

The notion of linear connectedness plays an important role in the structure of balanced frames and graph frames. It provides a natural graph-theoretic interpretation of a frame by connecting two frame vectors whenever their inner product is nonzero. The following definition and result, taken from \cite{sali}, will be useful in our study of balanced frames and graph frames.

A frame $F=\{f_i\}_{i=1}^N$ is said to be \emph{linearly connected} if, for every nonempty proper subset $\Lambda\subsetneq\{1,2,\ldots,N\},$ we have $\operatorname{span}\{f_i:i\in\Lambda\} \cap
\operatorname{span}\{f_i:i\in\Lambda^c\} \neq
\{0\}.$ Equivalently, $F$ is linearly connected if it is impossible to partition the frame into two nonempty subsets whose spans intersect trivially.\\

Let $F=\{f_i\}_{i=1}^{N}$ be a finite sequence of nonzero vectors in a Hilbert space $\mathcal H_n$. Two vectors $f_i$ and $f_j$ in $F$ are said to be \emph{linearly $F$-connected} (or simply
\emph{connected})\cite{sali} if there exist vectors $f_{k_1},\ldots,f_{k_\ell}\in F$ such that $\{f_j,f_{k_1},\ldots,f_{k_\ell}\}$ is linearly independent and $f_i = c\,f_j+\sum_{m=1}^{\ell} c_m f_{k_m},$ where all coefficients $c,c_1,\ldots,c_\ell$ are nonzero. We write $f_i \overset{F}{\longleftrightarrow} f_j$ whenever $f_i$ and $f_j$ are $F$-connected.
The sequence $F$ is called \emph{linearly connected} if every two vectors in $F$ are $F$-connected.

\begin{lemma}\label{lem3point6}
Let $F=\{f_i\}_{i=1}^N$ be a linearly connected graph frame for $\mathcal H_n$. Then the associated graph $\Gamma(F)$ is connected.
\end{lemma}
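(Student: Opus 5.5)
We argue by contraposition: if $\Gamma=\Gamma(F)$ is disconnected, we produce a partition of the index set into two nonempty pieces whose spans meet only in $0$. That contradicts linear connectedness, in the partition form of the definition recalled just before Lemma~\ref{lem3point6}.

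Since $F$ is a graph frame, $G_F=\Theta_F\Theta_F^*=L(\Gamma)$, so $\langle f_j,f_i\rangle=L(\Gamma)_{ij}$. Suppose $\Gamma$ is disconnected, and let $\Lambda$ be the vertex set of one connected component. Then $\emptyset\neq\Lambda\subsetneq\{1,\ldots,N\}$. No edge joins $\Lambda$ to $\Lambda^c$, so for $i\in\Lambda$ and $j\in\Lambda^c$ we have $a_{ij}=0$. Since $i\neq j$, the Laplacian entry $L(\Gamma)_{ij}=-a_{ij}$ also vanishes. Hence $\langle f_i,f_j\rangle=0$ for all $i\in\Lambda$, $j\in\Lambda^c$.

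It follows that $W_1=\operatorname{span}\{f_i:i\in\Lambda\}$ and $W_2=\operatorname{span}\{f_j:j\in\Lambda^c\}$ are orthogonal subspaces, because orthogonality of the spanning vectors extends to their spans by sesquilinearity. Therefore $W_1\cap W_2=\{0\}$: any $x$ in the intersection satisfies $\langle x,x\rangle=0$. This contradicts linear connectedness, so $\Gamma$ must be connected.

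The only delicate point is reconciling the two formulations of linear connectedness in the paper. One is the partition condition, stated as the spans having nontrivial intersection. The other is the pairwise $F$-connectedness from \cite{sali}. I would invoke the partition version directly, and note, citing \cite{sali}, that pairwise connectedness implies it. Indeed, if $f_i=cf_j+\sum_m c_mf_{k_m}$ with all coefficients nonzero, where $i\in\Lambda$ and $j\in\Lambda^c$, then splitting the sum according to $\Lambda$ and $\Lambda^c$ produces a vector lying in both spans. Linear independence of $\{f_j,f_{k_1},\ldots,f_{k_\ell}\}$ together with $c\neq0$ makes the $\Lambda^c$ part, which includes the term $cf_j$, nonzero. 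This is the step that needs care, but it is short.

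Nonzero frame vectors are assumed throughout, so there are no degenerate cases such as isolated zero vectors to worry about.
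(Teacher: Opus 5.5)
Your proof is correct and follows the paper's argument. A disconnected $\Gamma(F)$ gives a partition $\Lambda,\Lambda^c$ with vanishing cross Gramian entries; you usefully make explicit that $L(\Gamma)_{ij}=-a_{ij}$ for $i\neq j$, which the paper leaves implicit. The spans are then orthogonal, so they intersect trivially, contradicting the partition form of linear connectedness. Your side remark deriving the partition condition from pairwise $F$-connectedness is also sound, though the paper does not need it since it invokes the partition form directly.
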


\begin{proof}
Suppose, to the contrary, that the associated graph $\Gamma(F)$ is disconnected. Then there exists a nonempty proper subset $\Lambda\subsetneq\{1,2,\ldots,N\}$ such that there is no edge joining a vertex in $\Lambda$ with a vertex in $\Lambda^c$. Hence, $\langle f_i,f_j\rangle=0,
\quad
\forall\, i\in\Lambda,\ j\in\Lambda^c.$ Define the subspaces
\[
\mathcal H_{\Lambda} = \operatorname{span}\{f_i:i\in\Lambda\} \qquad \text{and} \qquad
\mathcal H_{\Lambda^c} = \operatorname{span}\{f_i:i\in\Lambda^c\}.
\]

It is easy to see that $\mathcal H_{\Lambda}\perp\mathcal H_{\Lambda^c},$ which implies $\mathcal H_{\Lambda}\cap\mathcal H_{\Lambda^c}
=\{0\}.$ This contradicts the assumption that $F$ is linearly connected. Therefore, the associated graph $\Gamma(F)$ must be connected.
\end{proof}

\begin{proposition}
Let $F=\{f_i\}_{i=1}^{N}$ be a balanced frame for $\mathcal H_n$. Suppose that $F$ is linearly connected and
$\langle f_i,f_j\rangle\in\{-1,0\},$ for all
$ i\neq j.$ Then $F$ is a graph frame and $N=n+1.$
\end{proposition}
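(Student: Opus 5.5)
The plan is to read everything off the Gramian $G_F=\Theta_F\Theta_F^*$. First I will show that it is exactly a graph Laplacian. Then I will use Lemma~\ref{lem3point6} together with the rank formula $\operatorname{rank}L(\Gamma)=N-c$ for the dimension count.

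\emph{Step 1 (row sums vanish).} Let $\mathbf 1=(1,\ldots,1)^t\in\mathbb C^N$. Since $F$ is balanced, $\Theta_F^*\mathbf 1=\sum_{i=1}^N f_i=0$, and hence $G_F\mathbf 1=\Theta_F\Theta_F^*\mathbf 1=0$. Entrywise this reads
\[
\|f_i\|^2=-\sum_{j\neq i}\langle f_j,f_i\rangle, \qquad i=1,\ldots,N.
\]

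\emph{Step 2 (identify the graph).} Define a graph $\Gamma$ on the vertices $v_1,\ldots,v_N$ by joining $v_i$ and $v_j$ ($i\neq j$) exactly when $\langle f_i,f_j\rangle=-1$.
\begin{itemize}
\item The relation is symmetric, because $\langle f_j,f_i\rangle=\overline{\langle f_i,f_j\rangle}$ and the value $-1$ is real. So $\Gamma$ is a finite simple graph with no loops and no multiple edges.
\item By hypothesis every off-diagonal entry of $G_F$ is $0$ or $-1$, so the off-diagonal part of $G_F$ equals $-A(\Gamma)$.
\item By Step~1, the diagonal entry $\|f_i\|^2$ equals the number of $j$ with $\langle f_j,f_i\rangle=-1$, which is the degree $d_i$ of $v_i$.
\end{itemize}
Therefore $G_F=D(\Gamma)-A(\Gamma)=L(\Gamma)$, so $F$ is a graph frame with associated graph $\Gamma(F)=\Gamma$.

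\emph{Step 3 (count dimensions).} Since $F$ is a linearly connected graph frame, Lemma~\ref{lem3point6} says that $\Gamma(F)$ is connected. By the recalled fact on Laplacians, $\operatorname{rank}L(\Gamma)=N-1$. On the other hand, $F$ is a frame for $\mathcal H_n$, so $\Theta_F^*$ is onto and $\Theta_F$ is injective. This gives
\[
\operatorname{rank}G_F=\operatorname{rank}(\Theta_F\Theta_F^*)=\operatorname{rank}\Theta_F=n.
\]
Comparing the two ranks yields $N-1=n$, that is, $N=n+1$.

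The argument is essentially bookkeeping. The only point needing care is Step~3. There I rely on Lemma~\ref{lem3point6} in the form proved: if $\Gamma$ were disconnected, the frame would split into two mutually orthogonal parts, and these have trivially intersecting spans. This contradicts linear connectedness in the sense of the subset definition. The graph-disconnection hypothesis used in the lemma is exactly the vanishing of all cross inner products across a cut. Under our hypotheses this is precisely what a missing edge means, since only $-1$ entries create edges. So the lemma applies verbatim.
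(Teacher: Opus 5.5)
Your proof is correct and follows the paper's route. Lemma~\ref{lem3point6} gives connectivity of $\Gamma$; the one-dimensional kernel of a connected Laplacian together with $\operatorname{rank} G_F=n$ then yields $N=n+1$. The paper re-derives $\ker L(\Gamma)=\operatorname{span}\{\mathbf 1\}$ from the quadratic form $\sum a_{ij}(x_i-x_j)^2$, whereas you cite the rank formula $\operatorname{rank}L(\Gamma)=N-c$.

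Your Steps~1--2 add something the paper's proof leaves out. There you use the balanced hypothesis to show that the diagonal of $G_F$ equals the vertex degrees, so that $G_F=L(\Gamma)$. The paper simply takes this for granted when it writes $\ker G(F)=\ker L(\Gamma)$, and never explicitly invokes balancedness.
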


\begin{proof}
Let $\Gamma$ be the associated graph of $F$. Since $F$ is linearly connected, Lemma~\ref{lem3point6} implies that $\Gamma$ is connected. Let $L(\Gamma)$ denote the Laplacian matrix of $\Gamma$, and let $\mathbf{1}=(1,1,\ldots,1)^t.$
Since $\Gamma$ is connected, it is well known that
\[
\ker L(\Gamma)=\operatorname{span}\{\mathbf{1}\}.
\]

Indeed, let $x=(x_1,\ldots,x_N)^t\in\ker L(\Gamma).$ Then $L(\Gamma)x=0.$
Since $L(\Gamma)=D-A,$ where $D$ is the degree matrix and $A=(a_{ij})$ is the adjacency matrix of $\Gamma$, we obtain
\[
x^tL(\Gamma)x
=
x^tDx-x^tAx
=0.
\]

Now,
\[
x^tDx
=
\sum_{i=1}^{N}d_ix_i^2
=
\sum_{i=1}^{N}
\left(\sum_{j=1}^{N}a_{ij}\right)x_i^2
=
\sum_{i,j=1}^{N}a_{ij}x_i^2,
\]
while $x^tAx
=
\sum_{i,j=1}^{N}a_{ij}x_ix_j.$
Hence, $\sum_{i,j=1}^{N}a_{ij}(x_i^2-x_ix_j)=0.$

Since $A$ is symmetric, we have
\[
\begin{aligned}
2x^tL(\Gamma)x
=
\sum_{i,j=1}^{N}a_{ij}(x_i^2-x_ix_j)
+\sum_{i,j=1}^{N}a_{ij}(x_j^2-x_ix_j)=
\sum_{i,j=1}^{N}
a_{ij}(x_i-x_j)^2.
\end{aligned}
\]

Since $a_{ij}\ge0$, it follows that
\[
a_{ij}(x_i-x_j)^2=0,\quad \text{for every $i,j$. }
\]
Therefore, $x_i=x_j,$ whenever $a_{ij}=1$, that is, whenever the vertices $i$ and $j$ are adjacent. Since $\Gamma$ is connected, every pair of vertices is joined by a path. Hence $x_i=x_j,$ for all $1\le i,j\le N$, so that $x\in\operatorname{span}\{\mathbf 1\}.$ Therefore,
\[
\ker G(F)=\ker L(\Gamma) = \operatorname{span}\{\mathbf1\}.
\]

Hence $\dim\ker G(F)=1.$ Since $\operatorname{rank}(G(F))
=
\dim\mathcal H_n
=
n,$ the rank-nullity theorem gives
\[
N
=
\operatorname{rank}(G(F))
+
\dim\ker G(F)
=
n+1.
\]

\end{proof}

Let $F=\{f_i\}_{i=1}^N$ be a frame for an $n$-dimensional Hilbert space
$\mathcal H_n$. Suppose that $F$ has $m$ linearly connected components
$F_1,F_2,\ldots,F_m,$ where $F_j=\{f_i:i\in \Lambda_j\},\quad 
\{\Lambda_j\}_{j=1}^m$ is a partition of $\{1,2,\ldots,N\}$. Let $W_j=\operatorname{span}F_j,\quad d_j=\dim W_j,\quad N_j=|\Lambda_j|.$

\begin{theorem}\cite{sali}\label{cor3point7}
Let $F=\{f_i\}_{i=1}^{N}$ be a sequence in a Hilbert space $\mathcal H_n$. Then there exists a partition $\{\Lambda_j\}_{j=1}^{J}$ of $\{1,2,\ldots,N\}$, unique up to permutations, such that

\begin{enumerate}
    \item Each subsequence $F_j:=\{f_i\}_{i\in\Lambda_j}$  is linearly connected;

    \item If $ H_j=\operatorname{span}\{f_i:\, i\in\Lambda_j\},$ then
$\mathcal{H}_n = H_1\oplus H_2\oplus\cdots\oplus H_J .$
\end{enumerate}
\end{theorem}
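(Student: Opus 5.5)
We will prove existence by taking a finest admissible partition, and uniqueness by a common-refinement argument. Throughout we use the subset form of linear connectedness stated before the theorem: $F_j$ is linearly connected if it cannot be split into two nonempty parts whose spans meet only in $\{0\}$. We assume, as in the definition of $F$-connectedness, that the $f_i$ are nonzero, and that $F$ spans $\mathcal H_n$ (for a general sequence, replace $\mathcal H_n$ by $\operatorname{span}F$).

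Call a partition $\{\Lambda_j\}_{j=1}^J$ of $\{1,\dots,N\}$ \emph{admissible} if $\mathcal H_n=H_1\oplus\cdots\oplus H_J$, where $H_j=\operatorname{span}\{f_i:i\in\Lambda_j\}$.

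\textbf{Existence.} The trivial partition $\{\{1,\dots,N\}\}$ is admissible. There are only finitely many partitions, so we may choose an admissible partition with the largest possible number of blocks. Suppose some block $\Lambda_j$ were not linearly connected. Then $\Lambda_j=\Lambda'\sqcup\Lambda''$ with both parts nonempty and
\[
\operatorname{span}\{f_i:i\in\Lambda'\}\cap\operatorname{span}\{f_i:i\in\Lambda''\}=\{0\}.
\]
Hence $H_j$ is the direct sum of these two spans. Replacing $\Lambda_j$ by $\Lambda'$ and $\Lambda''$ gives a strictly finer admissible partition, which contradicts maximality. So every block is linearly connected, and (1) and (2) hold.

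\textbf{Uniqueness.} Let $\{\Lambda_j\}$ and $\{\Gamma_k\}$ both satisfy (1) and (2), and put $K_k=\operatorname{span}\{f_i:i\in\Gamma_k\}$. The key step is the identity
\[
H_j=\bigoplus_k \operatorname{span}\{f_i:\ i\in\Lambda_j\cap\Gamma_k\}.
\]
The sum clearly equals $H_j$, since every $i\in\Lambda_j$ lies in exactly one $\Gamma_k$. For directness, suppose $\sum_k x_k=0$ with $x_k\in\operatorname{span}\{f_i:i\in\Lambda_j\cap\Gamma_k\}$. Each $x_k$ lies in $K_k$, and the sum $\bigoplus_k K_k$ is direct, so every $x_k=0$.

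Now suppose $\Lambda_j$ met two different blocks $\Gamma_k$. Group one nonempty piece $\Lambda_j\cap\Gamma_k$ against the union of the remaining pieces. By the directness just shown, their spans intersect trivially. This splits $\Lambda_j$ into two nonempty parts with trivially intersecting spans, contradicting the linear connectedness of $\Lambda_j$. Therefore each $\Lambda_j$ lies inside a single $\Gamma_k$. By the symmetric argument each $\Gamma_k$ lies inside a single $\Lambda_j$, so the two partitions coincide up to relabeling.

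The main obstacle I expect is the bookkeeping around the definitions rather than the algebra. First, the theorem states linear connectedness in the pairwise ``$F$-connected'' sense, while the argument above uses the subset form; reconciling the two is taken from \cite{sali}. Second, we must make sure that no zero vectors occur and that the ambient space is $\operatorname{span}F$, so that the blocks are nonempty and the direct sum in (2) is exactly $\mathcal H_n$.
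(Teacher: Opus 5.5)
The paper gives no proof of this theorem; it is imported from \cite{sali}. So there is no internal argument to compare yours with, and on its own terms your proof is correct. The paper actually relies on the subset form of linear connectedness, e.g.\ in Lemma~\ref{lem3point6}, where the contradiction comes from $\mathcal H_{\Lambda}\cap\mathcal H_{\Lambda^c}=\{0\}$. With that form, both halves of your argument check out:
\begin{enumerate}
\item Splitting a block along a separator refines the direct sum, so an admissible partition with the maximal number of blocks has connected blocks.
\item In the uniqueness step, the spans of the pieces $\Lambda_j\cap\Gamma_k$ are independent because they lie in the independent subspaces $K_k$. Hence a block meeting two $\Gamma_k$'s would have a separator.
\end{enumerate}
Your restriction to $\operatorname{span}F=\mathcal H_n$ is a necessary correction, not a convenience: for an arbitrary sequence, (2) fails (take $F=\{e_1\}$ in $\mathbb C^2$). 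The nonzero assumption, by contrast, is not needed under the subset definition, because a zero vector is forced into a singleton block with $H_j=\{0\}$. Compared with building the blocks as equivalence classes of the pairwise relation $\overset{F}{\longleftrightarrow}$, your top-down refinement argument is more elementary, since it never has to prove transitivity. The price is that it produces blocks that are connected only in the subset sense.

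That is why the step you defer to \cite{sali} is more than bookkeeping. For $i\neq j$, $f_i\overset{F}{\longleftrightarrow}f_j$ says exactly that $i$ and $j$ lie on a common circuit, i.e.\ a minimal linearly dependent subfamily.

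The implication from the pairwise form to the subset form is easy. Let $\sum_{s\in C}\alpha_sf_s=0$ be the relation of a circuit $C$ through $i\in\Lambda$ and $j\in\Lambda^c$. Then $\sum_{s\in C\cap\Lambda}\alpha_sf_s$ lies in both spans, and it is nonzero by minimality of $C$. So your uniqueness proof also covers the pairwise reading.

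Existence under the pairwise reading, however, needs the converse: inside a block with no separator, every two indices lie on a common circuit. This is the genuinely nontrivial matroid fact that ``lying on a common circuit'' is transitive, which is proved via circuit elimination. If the theorem is read with the definition from \cite{sali}, that is the one substantive ingredient your write-up leaves out. With the paper's subset definition, your proof is complete.
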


The linearly connected decomposition provides a natural way to characterize when a balanced frame can be realized as a graph frame. The following theorem shows that this occurs precisely when each linearly connected component has excess one.

\begin{theorem}\label{graphframethm4.10}
Let $F=\{f_i\}_{i=1}^{N}$ be a balanced frame for $\mathcal H_n$. Suppose that
\[
F=F_1\cup F_2\cup\cdots\cup F_m,
\]
be the linearly connected decomposition of $F$. Let $\mathcal H_j=\operatorname{span}(F_j),\quad
N_j=|\Lambda_j|,\quad
d_j=\dim\mathcal H_j.$
Then $F$ is equivalent to a graph frame if and only if $N_j=d_j+1,\quad j=1,\ldots,m.$

\end{theorem}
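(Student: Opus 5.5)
We read ``$F$ is equivalent to a graph frame'' as meaning that there is an invertible operator $T$ on $\mathcal H_n$ such that $TF=\{Tf_i\}_{i=1}^N$ is a graph frame. The proof rests on one observation: for invertible $T$, the frames $F$ and $TF$ have the same linear dependence relations, $\ker\Theta_{TF}^*=\ker\Theta_F^*$. For a graph frame this kernel equals $\ker G_{TF}=\ker L(\Gamma)$. The same computation as in the proof of the preceding proposition shows that $\ker L(\Gamma)$ is spanned by the indicator vectors $\mathbf 1_C$ of the connected components $C$ of $\Gamma$. So both directions reduce to comparing dependence relations with the linearly connected decomposition of Theorem~\ref{cor3point7}.

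\emph{Necessity.} Suppose $TF$ is a graph frame with graph $\Gamma$, whose connected components have vertex sets $C_1,\dots,C_c$.

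First, vectors of $TF$ in different components are orthogonal, since the corresponding Gram entries vanish. Hence $\mathcal H_n=\bigoplus_{s}\operatorname{span}\{Tf_i:i\in C_s\}$. Pulling back by $T^{-1}$ gives $\mathcal H_n=\bigoplus_s\operatorname{span}\{f_i:i\in C_s\}$.

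Second, the dependence space of $\{f_i\}_{i\in C_s}$ is the set of vectors in $\ker L(\Gamma)$ supported on $C_s$, which is $\operatorname{span}\{\mathbf 1_{C_s}\}$. Therefore $|C_s|=\dim\operatorname{span}\{f_i:i\in C_s\}+1$.

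Third, each $\{f_i\}_{i\in C_s}$ is linearly connected. Suppose some proper partition $C_s=\Lambda\cup\Lambda'$ had spans meeting trivially. Then the relation $\sum_{i\in C_s}f_i=0$ would split into $\sum_{i\in\Lambda}f_i=0$. This gives the nonzero dependence $\mathbf 1_\Lambda$, which is not a multiple of $\mathbf 1_{C_s}$, a contradiction.

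By the uniqueness in Theorem~\ref{cor3point7}, $\{C_s\}$ is the linearly connected decomposition $\{\Lambda_j\}$. Hence $N_j=d_j+1$ for every $j$.

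\emph{Sufficiency.} Assume $N_j=d_j+1$ for all $j$. Since $\mathcal H_n=\bigoplus_j\mathcal H_j$, the balancing relation $\sum_i f_i=0$ splits into $\sum_{i\in\Lambda_j}f_i=0$ for each $j$. Because $\{f_i\}_{i\in\Lambda_j}$ spans $\mathcal H_j$, its dependence space has dimension $N_j-d_j=1$. It is therefore exactly $\operatorname{span}\{\mathbf 1_{\Lambda_j}\}$.

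Now fix a connected graph $\Gamma_j$ on $N_j$ vertices, for instance the complete graph. Its Laplacian $L(\Gamma_j)$ is positive semidefinite of rank $N_j-1=d_j$ with kernel $\operatorname{span}\{\mathbf 1\}$. Factor $L(\Gamma_j)=\Theta_{E_j}\Theta_{E_j}^*$ for a sequence $E_j=\{e_i\}_{i\in\Lambda_j}$ spanning a $d_j$-dimensional space $K_j$. Choose the $K_j$ to be mutually orthogonal subspaces of $\mathcal H_n$; this is possible because $\sum_j d_j=n$.

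The dependence space of $E_j$ is $\ker\Theta_{E_j}^*=\ker L(\Gamma_j)=\operatorname{span}\{\mathbf 1\}$, which coincides with that of $F_j$. So, by the factorization lemma stated earlier ($T=MS$ exists iff $\ker S\subseteq\ker T$), there is a well-defined linear map $T_j:\mathcal H_j\to K_j$ with $T_jf_i=e_i$. It is invertible because the dependence spaces agree in both directions.

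Set $T=\bigoplus_j T_j$. Then $T$ is invertible on $\mathcal H_n$, and the Gramian of $TF$ is block diagonal with blocks $L(\Gamma_j)$. This is exactly $L\bigl(\bigsqcup_j\Gamma_j\bigr)$, so $TF$ is a graph frame.

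The step I expect to need the most care is the identification, in the necessity direction, of the graph components with the linearly connected components. This hinges on the fact that the only dependence supported in a component is its indicator vector, together with the uniqueness part of Theorem~\ref{cor3point7}.
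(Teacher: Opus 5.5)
Your proof is correct and follows the same route as the paper. The paper uses the invariance of $\ker\Theta_F^*$ under invertible $T$, identifies this kernel with $\ker L(\Gamma)$, compares the one-dimensional kernels spanned by $\mathbf 1$ component by component, and assembles $T=\bigoplus_j T_j$. Where the paper merely asserts that each $TF_j$ corresponds to a connected component of $\Gamma$ and that each $F_j$ is balanced, you justify both. For the first, you use orthogonality across graph components, linear connectedness of each component, and the uniqueness in Theorem~\ref{cor3point7}; for the second, you split $\sum_i f_i=0$ along the direct sum $\bigoplus_j\mathcal H_j$.
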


\begin{proof}
Suppose first that $F$ is equivalent to a graph frame. Then there exist an invertible operator $T$ such that $G(TF) = L(\Gamma),$ for some graph $\Gamma.$ Therefore, each $TF_j$ corresponds to a connected component $\Gamma_j$ of $\Gamma$ i.e. $G(F_j)=L(\Gamma_j).$ Since $\Gamma_j$ is connected,
$\dim\ker L(\Gamma_j)=1.$ On the other hand, $\operatorname{rank}(G(F_j))
=
\dim\mathcal H_j
=
d_j.$ Hence, $\dim\ker G(F_j)
=
N_j-d_j.$ Since $\ker G(F_j)=\ker L(\Gamma_j),$ we obtain
\[
N_j-d_j=1, \quad
j=1,\ldots,m.
\]

Conversely, suppose that $N_j=d_j+1,
\quad
j=1,\ldots,m.$

Let $\Theta_j^*:\mathbb{R}^{N_j}\longrightarrow\mathcal H_j$
be the synthesis operator of the frame $F_j$. Since $\operatorname{rank}(\Theta_j^*)
=
d_j
=
N_j-1,$
we have $\dim\ker\Theta_j^*=1.$ Moreover, since $F_j$ is balanced, $\sum_{i\in\Lambda_j}f_i=0,$ and therefore $(1,\ldots,1)^t\in\ker\Theta_j^*.$
Since the kernel is one-dimensional, $\ker\Theta_j^* = \operatorname{span}\{(1,\ldots,1)^t\}.$

Now choose a connected graph $\Gamma_j$ with $N_j$ vertices, and let
\[
G_j=\{g_i^{(j)}\}_{i\in\Lambda_j}
\]
be the corresponding graph frame. Since $\Gamma_j$ is connected,
\[
\ker\Theta_{G_j}^*
=
\ker L(\Gamma_j)
=
\operatorname{span}\{(1,\ldots,1)^t\}.
\]

Hence,
\[
\ker\Theta_j^*
=
\ker\Theta_{G_j}^*.
\]

By the characterization of finite frame equivalence via synthesis operators, two finite frames spanning the same space are equivalent if and only if their synthesis operators have the same kernel. Therefore, there exists an invertible operator
\[
T_j:\mathcal H_j\longrightarrow\mathcal K_j
\]
such that
\[
T_jf_i=g_i^{(j)},
\qquad
i\in\Lambda_j.
\]

Since $\mathcal H_n
=
\mathcal H_1\oplus\cdots\oplus\mathcal H_m,$
define
\[
T
=
T_1\oplus\cdots\oplus T_m.
\]
Then $T$ is invertible and maps $F$ onto the graph frame associated with the disjoint union
$\Gamma
=
\Gamma_1\sqcup\cdots\sqcup\Gamma_m.$ Hence, $F$ is equivalent to a graph frame.
\end{proof}

\noindent The following example illustrates the componentwise characterization given in Theorem~\ref{graphframethm4.10}. In particular, it shows how the condition $N_j=d_j+1$ on each linearly connected component leads to a graph-frame representation of the entire balanced frame.

\begin{example}
Let $\mathcal H_4=\mathbb R^2\oplus\mathbb R^2$, and consider
the balanced frame
\[
F=F_1\cup F_2,
\]
where
\[
F_1=
\left\{
(1,0,0,0),\,
(0,1,0,0),\,
(-1,-1,0,0)
\right\}
\]
and
\[
F_2=
\left\{
(0,0,1,0),\,
(0,0,1,1),\,
(0,0,-2,-1)
\right\}.
\]
The families $F_1$ and $F_2$ are the linearly connected
components of $F$. Moreover,
\[
H_j=\operatorname{span}(F_j),\qquad
d_j=\dim H_j=2,\qquad N_j=3,
\quad j=1,2.
\]
Thus
\[
N_j=d_j+1,\qquad j=1,2.
\]
It follows from Theorem~\ref{graphframethm4.10} that $F$ is equivalent to a graph
frame.

More specifically, each component $F_j$ is equivalent to a
graph frame associated with a connected graph on three vertices.
Consequently, $F$ is equivalent to a graph frame associated with
the disjoint union of two connected three-vertex graphs.

This example also illustrates the componentwise nature of
Theorem~\ref{graphframethm4.10}: graph-frame equivalence is determined not merely by
the total redundancy of $F$, but by the precise redundancy
$N_j-d_j=1$ within each linearly connected component.
\end{example}

\section{Conclusion and Future Directions}

In this paper, we introduced the notion of vanishing $A$-moments
and developed a framework connecting moment conditions, dual frames,
dilation theory, and robustness against erasures. In particular, we
obtained equivalent characterizations for the existence of a
vanishing $A$-moment dilation in terms of dual frames and the
relative position of the row space of $A$ and the range of the
analysis operator. We also investigated the role of linear
dependencies in erasure robustness and showed how full-spark and
totally nonsingular matrices naturally arise in this setting.

As an important specialization, we considered higher-order
vanishing moments and characterized the existence of dual frames
satisfying such moment conditions. The case of the zeroth moment
leads naturally to balanced frames and provides a connection with
frames arising from graph Laplacians. Using the linearly connected
decomposition of a balanced frame, we obtained a componentwise
characterization of those balanced frames that are equivalent to
graph frames.

Several directions arising from the present work appear worthy of
further investigation. One natural problem is to study vanishing
$A$-moment dilations in infinite-dimensional Hilbert spaces and to
determine which of the finite-dimensional characterizations persist
in that setting. It would also be interesting to investigate
structured choices of the matrix $A$, particularly those arising
from harmonic analysis, sampling theory, wavelets, and structured
linear dependencies, and to understand how the algebraic properties
of $A$ influence the geometry and robustness of the associated
frames.

Another direction concerns the construction and optimization of
dual frames satisfying prescribed higher-order vanishing moments.
In particular, one may seek duals which simultaneously satisfy
moment constraints and optimize robustness or reconstruction error
under erasures. Finally, the connection between balanced frames and
graph Laplacians suggests studying more general graph-theoretic
models, including weighted graphs and other graph-associated
operators. Such extensions may lead to broader classes of
structured frames and provide further connections between frame
theory, harmonic analysis, and spectral graph theory.

\section{Acknowledgment}
The first author acknowledges partial support from a Simons travel award and the second  author is grateful to the Mohapatra Family Foundation and the College of Graduate Studies at the University of Central Florida for supporting his postdoctoral fellowship during the
course of this research.

\section{Declaration} 
\noindent Ethical approval: Not applicable.\\
Data Availability Statement: Not Applicable.\\
Conflict of Interest: Not Applicable.\\
Clinical Trial: Not applicable.

\section{Funding}

This research did not receive any specific grant from funding agencies in the public, commercial, or not-for-profit sectors.

\end{document}